\newcommand{\R}{\ensuremath{\mathbb{R}} }
\newcommand{\QQ}{\ensuremath{\mathbb{Q}} }
\newcommand{\Mb}{\mathbf{M}}
\newcommand{\Mbm}{\mathbf{M}^{-1}}
\newcommand{\E}{\mathbb{E}}
\def \Var{\hbox{{\textrm{Var}}}}
\def \Cov{\hbox{{\textrm{Cov}}}}
\newcommand{\Xn}{X_n}
\newcommand{\Xpn}{X'_{n}}
\newcommand{\Xpnp}{X'_{n+1}}
\newcommand{\Xnp}{X_{n+1}}
\newcommand{\Un}{U_n}
\newcommand{\Unp}{U_{n+1}}
\newcommand{\Yn}{Y_n}
\newcommand{\Ynp}{Y_{n+1}}
\newcommand{\YUn}{Y^{(U)}_{n}}
\newcommand{\YUnp}{Y^{(U)}_{n+1}}
\newcommand{\mn}{\widehat{m}_n}
\newcommand{\mnp}{\widehat{m}_{n+1}}
\newcommand{\Sbn}{\hat{\mathbf{S}}_n}
\newcommand{\Sbk}{\hat{\mathbf{S}}_k}
\newcommand{\Sbun}{\hat{\mathbf{S}}_1}
\newcommand{\Sbncorr}{{\hat{\mathbf{S}}_n}}
\newcommand{\Sbnp}{\hat{\mathbf{S}}_{n+1}}
\newcommand{\Hhnp}{\widehat{\nabla \phi}_{n+1}}
\newcommand{\Sb}{\mathbf{S}^*}
\newcommand{\Sbf}{\mathbf{S}}
\newcommand{\Sc}{\Sigma^*}
\newcommand{\Yu}{Y^{(u)}}
\newcommand{\Ups}{\Upsilon}
\newcommand{\N}{\mathbb{N}}
\newcommand{\un}{\mathds{1}}
\def\Dp{\mathcal{D}_{h}}
\newtheorem{theo}{Theorem}[section]
\newtheorem*{theo*}{Theorem}
\newtheorem{cor}[theo]{Corollary}
\newtheorem{rem}[theo]{Remark}
\newtheorem{prop}[theo]{Proposition}
\newtheorem{lemma}[theo]{Lemma}
\definecolor{no}{RGB}{255,127,14}
\begin{document}

\date{\today}
\title{On-line Pick-Freeze Mirror algorithm for Sensitity Analysis}

\author[1]{Manon Costa}
\author[2]{Sébastien Gadat} 
\author[3,4]{Xavier Gendre}
\author[3,5]{Thierry Klein}
\affil[1]{Institut de Math\'ematiques de Toulouse; UMR5219. Universit\'e de Toulouse.}
\affil[2]{Toulouse School of Economics, CNRS UMR 5314, Université Toulouse 1 Capitole, Toulouse. Institut Universitaire de France. }
\affil[3]{Institut de Math\'ematiques de Toulouse; UMR5219. Universit\'e de Toulouse}
\affil[4]{Pathway, Paris, France}
\affil[5]{ENAC - Ecole Nationale de l'Aviation Civile , Universit\'e de Toulouse, France}
%

\maketitle

\begin{abstract}	The main objective of this paper is to propose a new approach for estimating the entire collection of Sobol' indices simultaneously. 
    	Our approach exploits the fact that Sobol' indices can be rewritten as solutions to an optimisation problem over the simplex of $\R^d$, to construct an online sequence of estimators using a stochastic mirror descent algorithm. We prove that our estimation procedure is consistent and provide a non-asymptotic upper bound for its rate of convergence.  Furthermore, we demonstrate the numerical accuracy of our method and compare it with other classical estimation procedures.
\end{abstract}

%
\section{Introduction} \label{sec:intro}

\subsection{Motivation}
The study of how a numerical code's output depends on its input variables has become critically important in many fields, including physics, engineering, applied mathematics, and signal and image processing, among others.
Sensitivity analysis is particularly important in fields where meta-models are used, such as modeling coastal flooding \cite{betancourt:hal-01998724}, optimizing aircraft geometry in aeronautics \cite{peteilh:hal-02866381}, and in various other areas of engineering. Further examples can be found in \cite{broto2020variance,marrel2012global,GJKL14}.
The importance of sensitivity analysis is growing with the advent of artificial intelligence techniques, as the use of deep neural networks with highly sophisticated architectures is gradually replacing costly, traditional, physics-based codes for computing outputs through a regression paradigm, for example \cite{doury2023regional,doury2024suitability}. However, the complexity of these AI-generated models is often perceived as mysterious and difficult to interpret, particularly with regard to how the output is influenced by the input variables. It is nevertheless crucial to understand the effects of input variables on a code’s output, in order to improve the generation of certain input variables and to enhance public acceptance of how the code operates.
Therefore, it is important to quantify the influence of variables on the system with a reasonably limited number of evaluations and computations.

When these inputs are regarded as random elements, this problem is generally referred to as global sensitivity analysis.
Global sensitivity analysis considers the input vector as random and provides a measure of the influence, in terms of output fluctuations, of each subset of its components. We refer to the seminal book \cite{saltelli-sensitivity} for an overview, to \cite{da2021basics} for a synthesis of recent trends in this field, and to \cite{rocquigny2008uncertainty,saltelli-sensitivity,santner2003design,sobol1993} for a discussion of the practical aspects of global sensitivity analysis.

Of the various measures used in global sensitivity analysis, variance-based measures, derived from Hoeffding's decomposition of variance, are probably the most commonly used. When considering an output $Y$ of a computer code modeled as $Y=f(X^1,\ldots,X^p)$, one obtains one of the most common measures of the sensitivity of $Y$ with respect to a subset of inputs ${X^i, i\in u}$ for $u\subset{1,\cdots,p}$: the closed Sobol' index $\Sigma_u$, defined by
 \begin{equation*}
 \Sigma_u=\frac{\Var(\E[Y|X^i, i\in u])}{\Var(Y)}=\frac{\E[\E[Y|X^i, i\in u]^2]-\E[Y]^2}{\Var(Y)}.
 \end{equation*}
In recent years, a myriad of different estimators have been proposed (see \cite[Chapter 4]{da2021basics} for a full review). A first class gathers spectral methods that aim at constructing estimators based on the spectral analysis of the input/output functional relationship and Parseval's formula. We refer to \cite{sudret2008global} for a basic description of the method and to \cite[Chapter 4]{da2021basics} for more recent references. It should be noted that the asymptotic properties of these methods have been little studied, as they are based on the theory of non-linear (quadratic) functional estimation. 
\medskip
A second class of methods requires that the covariates $(X^1,\dots,X^p)$ be independent in order to obtain the asymptotic properties of the estimators.
Among them, the so-called Pick–Freeze design of experiments aims at computing a Monte Carlo estimate of $\Sigma_u$ from a sequence of evaluations of $f$ on a sample of input values, fixing those of the subset $u$. The main advantage is that only minimal assumptions on $f$ are required to derive consistency and central limit theorems. In particular, assumptions of integrability, but not regularity, on $f$ are necessary (see, for example, \cite{pickfreeze, janon2012asymptotic}). Moreover, to estimate at rate $\sqrt{n}$ a single Sobol' index, one needs a design of experiments of size $2n$. This implies that estimating all the $p$ first-order indices involves a sample of size $(p+1)n$, and a $2^p n$ sample is required to estimate all Sobol' indices.
This method has two drawbacks: the size of the DOE required to estimate all indices grows exponentially with the number of input variables, and it relies on the very specific Pick–Freeze experimental design.
\medskip

In order to tackle these limitations, methods based on local averaging have been developed. Among them are kernel estimators, which have been thoroughly studied for the case $d=1$ \cite{da2009local,da2008efficient,plischke2020fighting,solis2021non,heredia2021nonparametric}, and provide central limit theorems and asymptotic efficiency as soon as the function $f$ satisfies regularity assumptions. Closely related are nearest-neighbor approaches, which have been studied by several authors (see, e.g., \cite{devroye2003estimation,liitiainen2008nonparametric,liitiainen2010residual,devroye2013strong,gadat2016classification,gyorfi2015asymptotic,devroye2018nearest}). For instance, in \cite{devroye2018nearest}, the authors propose a plug-in estimator with statistical consistency for any $d$, and a central limit theorem with rate $\sqrt{n}$ for $d\leq 3$, provided that regularity assumptions hold. In parallel, \cite{broto2020variance} consider a variant that is consistent for any $d$, but no rate of convergence is provided. When $d=1$, a central limit theorem for estimators based on ranks (i.e., nearest neighbors on the right) is also proved in \cite{GGKL20}.
Recently, in \cite{da_veiga_efficient_2024}, the authors used high-order kernels to build asymptotically efficient estimators of Sobol' indices of any dimension from a single i.i.d.\ $n$ input/output sample. Their result requires strong smoothness assumptions on $f$ and on the support of the input variables {which may be restrictive in some irregular situations.}
\medskip

In order to build an estimator of Sobol' indices using any of the methods described above, the practitioner needs to have access to the complete dataset. Unfortunately, in some applications, the data are generated online and cannot be stored.%
The goal of this work is to introduce and to  study a new numerical scheme based on online data for simultaneously estimating the $2^p$ Sobol' indices taking into account their geometrical constraints.

\subsection{Organisation of the paper}

The article is organized as follows. Section \ref{sec:sobol_intro} introduces a variational characterization of the collection of all Sobol' indices as the minimizer of a strongly convex function (Corollary \ref{cor:variational_sobol}). This section also summarizes well-known key concepts from global sensitivity analysis. Section \ref{Sec:sgd_sobol} presents the Pick–Freeze Mirror algorithm, proposed to solve the constrained optimization problem and to construct a sequence $\Sbf^*_n$ of estimators for the Sobol' indices. Section \ref{sec:experiments} then presents numerical experiments that illustrate the performance and practical behavior of the proposed algorithm on benchmark problems. Finally, Section \ref{sec:proofs} is devoted to the proofs of our theoretical results.

\section{Constrained variational characterization of Sobol' indices}
\label{sec:sobol_intro}

\subsection{Framework and notations}\label{sec:framework}
{We introduce below a set of key definitions and notations, which are highlighted in \textbf{bold} for improved readability.}

{\paragraph{Numerical code}
We consider $X = (X^1, \ldots, X^p)$, a real-valued $p$-dimensional random vector representing the input of a numerical code, and $Y$, the output random variable associated with a black-box function $f: \R^p \longrightarrow \R$. The output $Y$ is linked to the input $X$ through the simple relation
\begin{equation}
\label{eq:mod}
Y = f(X) \quad \text{where} \quad X = (X^1, \ldots, X^p).
\end{equation}
We assume that $X$ follows an unknown distribution $\QQ$, and that its components $(X^j)_{j \le p}$ are \textit{independent}. For numerical purposes, we further assume that the variables $(X^j)_{j \le p}$ can be easily simulated.
Our theoretical results require only that
\begin{equation}
\label{eq:moment4}
\E[Y^4] < \infty.
\end{equation}}

{\paragraph{Subset notations}
Throughout the paper, we frequently consider subsets of variables, denoted by $u \subset \{1, \ldots, p\} \setminus \emptyset$. For convenience, we denote by $\Ups$ the set of \textit{all} subsets of $\{1, \ldots, p\}$.
When $u \in \Upsilon$, $|u|$ denotes the cardinality of the subset $u$, with the convention that $|\emptyset| = 0$. {Below, $q=2^p$ will refer to the cardinal of $\Upsilon$.}}
  
{
\paragraph{Sobol' indices}
A now-standard approach for global sensitivity analysis relies on the computation of Sobol' indices, a technique introduced in \cite{pearson1915partial} and later popularized by \cite{sobol2001global}. These indices are particularly suitable when the numerical code returns a real-valued output, denoted by $Y$.}

{Using the ANOVA-Hoeffding decomposition of the variance (see \textit{e.g.} \cite{Hoeffding48,Sobol1969,efronstein}), the objective of Sobol' indices is to compare the conditional variance of $Y$, when fixing the values of a subset of variables
$(X^i, i \in u)$, to the total variance of $Y$
\begin{equation}
\label{eq:Hoeff}
\Var(Y)=\sum_{u \in \Ups} V_u
\quad \text{
with }
V_u=\sum_{v\subset u}(-1)^{|u|-|v|}\Var(\E[Y|X^i, i\in v]) \ge 0.
\end{equation}
We shall observe that $V_\emptyset= \Var(\E[Y]) = 0$, regardless the distribution of the inputs $X$, so that $V_\emptyset$ is always zero and does not need any estimation procedure.
The Sobol' $\Sb=(S^*_u)_{u\in \Ups}$ and closed Sobol' $\mathbf{\Sc}=(\Sc_u)_{u\in \Ups}$ indices with respect to $(X^i,i\in u)$ are defined by:
\begin{equation}\label{def:sobol}
S^*_u:=\frac{V_u}{\Var(Y)} \, 
\quad \text{and} \quad
\Sc_u:=\frac{\Var(\E[Y|X^i, i\in u])}{Var(Y)}
\,.
\end{equation}

{\paragraph{Simplex $\Delta_q$}
Dividing both sides of \eqref{eq:Hoeff} by $\Var(Y)$, we observe that $\Sb$ belongs to the simplex of discrete probability distributions over subsets defined by: \[\Delta_q:=\left\{s=(s_u)_{u\in \Ups }, \forall u \in \Ups: s_u \ge 0 \quad \text{and} \quad s_{\emptyset}=0 \quad \text{and} \quad \sum_{u\in \Ups} s_u=1.\right\}.
\]
Moreover, we also deduce from Equation \eqref{eq:Hoeff} that Sobol' and closed Sobol' indices are linked through a linear transformation (see Appendix \ref{app:spectre-M} for more details). Hence, we have:
\begin{equation}
    \label{def:M}
    \Sb=\Mb\mathbf{\Sc} \quad \text{with} \quad 
    \forall (u,v) \in \Ups \times \Ups \qquad \Mb_{u,v}=  (-1)^{|u|-|v|} \mathbf{1}_{v \subset u}.
\end{equation}
Finally, the matrix $\Mb$ can be inverted using the Rota–Möbius inversion formula \cite{rot64}, thus

\begin{equation}
    \label{def:M_inv}
    \forall (u,v) \in \Ups \times \Ups \qquad \Mb^{-1}_{u,v}=  \mathbf{1}_{v \subset u} \quad \text{and} \quad 
    \mathbf{\Sc} = \Mb^{-1} \Sb.
\end{equation}}

\subsection{Pick-Freeze trick}

In this section, we briefly recall the classical "Pick-Freeze" strategy for estimating Sobol' indices.
We consider  $X'=(X'^1,\ldots,X'^p)$ an independent copy of $X=(X^1,\ldots,X^p)$ so that $(X,X') \sim \QQ \otimes \QQ$.
\begin{prop}[Pick-Freeze trick]\label{prop:PickFreeze}
\label{cor:variational_sobol}
For any $u\in \Ups$, we have:
\begin{itemize}
    \item[(i)] If $Y=f(X)$, $\Yu=f(X^{(u)})$ where $X^{(u)}$ is built from $X$ and $X'$ as 
$$
\forall i \in \{1,\ldots,p\} \quad X^{(u),i}=X^i \mathbf{1}_{i \in u} + X'^i \mathbf{1}_{i \notin u},
$$
then $\Var(\E[Y|X^i, i\in u])=\Cov(Y,\Yu) $ and 
\begin{equation}
\label{eq:PF2}
\Sc_u=\frac{\Cov(Y,\Yu)}{\Var(Y)}.
\end{equation}
In what follows, $(Y,Y^u)$ will be refered to as a Pick-Freeze sample.
\item[(ii)] Furthermore $$\Sc_u = \arg\min_{\theta \in [0,1]} \psi_u(\theta) \quad \text{with} \quad \psi_u(\theta) := \E\left[\left((Y-\E[Y]) \theta -(\Yu-\E[Y])\right)^2\right],
$$
where  the expectation has to be considered with respect to the pair of random variables $(Y,\Yu)$ defined above.
\end{itemize}
\end{prop}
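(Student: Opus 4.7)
The plan is to treat items (i) and (ii) separately; item (ii) will follow from (i) by an elementary one-dimensional quadratic minimization. I set the shorthand $Z := (X^i,\, i\in u)$ throughout.

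For item (i), I would exploit the defining property of $X^{(u)}$: by construction, $X^{(u),i} = X^i$ for $i\in u$ and $X^{(u),i} = X'^i$ for $i\notin u$, so $X$ and $X^{(u)}$ share exactly the block $Z$, while their remaining coordinates come from the two independent copies $X$ and $X'$. Using the independence of the components of $X$ together with the independence of $X$ and $X'$, I would argue that conditionally on $Z$ the random variables $Y=f(X)$ and $Y^u=f(X^{(u)})$ are independent and identically distributed (both equal in law to the conditional distribution of $Y$ given $Z$). In particular $\E[Y^u|Z] = \E[Y|Z]$ and $\E[YY^u|Z] = \E[Y|Z]^2$. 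Taking expectations and using the tower property then gives
\[
\Cov(Y,Y^u) \;=\; \E\bigl[\E[Y|Z]^2\bigr] - \E[Y]^2 \;=\; \Var(\E[Y|Z]),
\]
which, after dividing by $\Var(Y)$, is exactly \eqref{eq:PF2}.

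For item (ii), expanding the square yields
\[
\psi_u(\theta) \;=\; \theta^2 \Var(Y) \;-\; 2\theta\, \Cov(Y,Y^u) \;+\; \Var(Y^u).
\]
Because each coordinate of $X^{(u)}$ has the same distribution as the corresponding coordinate of $X$ and the coordinates of $X^{(u)}$ remain independent, one has $X^{(u)}\stackrel{d}{=}X$, hence $\Var(Y^u)=\Var(Y)$. Assuming the non-degenerate case $\Var(Y)>0$, the map $\psi_u$ is a strictly convex quadratic whose unique unconstrained minimizer is $\theta^{\star} = \Cov(Y,Y^u)/\Var(Y) = \Sc_u$ by item (i). Finally, the law of total variance $\Var(Y)=\Var(\E[Y|Z])+\E[\Var(Y|Z)]$ shows that $0\le \Sc_u\le 1$, so the unconstrained minimizer already lies inside the constraint set $[0,1]$, and is therefore the constrained minimizer as well.

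I do not anticipate a serious obstacle: the only step that requires any care is the conditional-independence claim underlying (i), which must be justified from the joint construction of $(X,X')$ rather than invoked informally; once it is in place, both assertions reduce to standard manipulations of conditional expectations and of a scalar quadratic.
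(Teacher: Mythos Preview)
Your argument is correct and is the standard route to both assertions. Note, however, that the paper does not actually supply a proof of this proposition: it is stated as a classical fact, with item~(i) attributed to the Pick--Freeze literature and item~(ii) pointed to Equation~(2.4) of \cite{janon2012asymptotic}. Your write-up is exactly the kind of proof one would give, and the only subtle point---the conditional independence of $Y$ and $Y^u$ given $(X^i)_{i\in u}$---is the one you already flag as requiring care.
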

For a fixed subset $u \in \Ups$, the ``Pick–Freeze" trick leads to the so-called ``Pick–Freeze" estimator, which corresponds to a Monte Carlo version of definition \eqref{eq:PF2}. These estimators have been extensively studied in the literature \cite{sobol2001global,saltelli2008global,Monod2006} and are known to be consistent and asymptotically normal \cite{janon2012asymptotic}. In addition to their favorable asymptotic properties, it is worth noting that the number of evaluations of $f$ required to estimate $\Sc_u$ is $2n$, yielding a convergence rate of $\sqrt{n}$. To estimate $K$ Sobol' indices, this number increases to $(K+1)n$, so that the total computational cost reaches $2^p n$, which grows exponentially with $p$.

\subsection{Variational characterization of $\Sc$ and $\Sb$}\label{sec:caracterisation}
In this work, we propose a method for the simultaneous estimation of \textit{all} Sobol' indices, assuming access to a sequence of data compatible with the Pick–Freeze trick.
The variational characterization of $\mathbf{\Sc}$, stated in Proposition \ref{prop:PickFreeze} $(ii)$, implies that a standard gradient descent method can be naturally applied to obtain an accurate approximation of $\Sc_u$, since the functional $\theta \mapsto \psi_u(\theta)$ is strongly convex.

We perform a convex aggregation of all the functions $\psi_u$ using a collection of weights $(a_u)_{u \in \Ups}\in\Delta_q$, for now chosen arbitrarily, and define the function $\Psi^a$ as follows:
\begin{equation}
    \label{def:psi-a}
    \forall x =(x_u)_{u \in \Ups} \qquad 
    \Psi^{a}(x) = \frac{1}{2} \sum_{u \in \Ups} a_u \psi_u(x_u).
\end{equation}

The probability distribution $(a_u)_{u\in\Ups}$ which will be denoted by $\mathcal{L}^a_{\Ups}$ is for now chosen arbitrarily such that:
\begin{equation}
\label{eq:hyp_a}
\min_{u\in\Ups} a_u>0.
\end{equation}
For any such $a$, a straightforward consequence of Proposition \ref{cor:variational_sobol} $(ii)$ is that $\Psi^a$ is minimized for $x=\mathbf{\Sc}$.

Finally, recall that $\Sb=\Mb \mathbf{\Sc}$ and $\Sb\in\Delta_q$,
and define
\begin{equation}
    \label{def:Phi-a}
    \forall s \in \Delta_q: \qquad \Phi^{a}(s):=\Psi^{a}(\Mbm s).
\end{equation}
This leads to the following characterization result for the set of \textit{all} Sobol' indices, which will serve as the cornerstone of our work.
\begin{cor}[Variational characterization of $\Sb$]
\label{theo:variational_sobol}
For any discrete positive probability distribution $a=(a_u)_{u \in \Ups}$, satisfying \eqref{eq:hyp_a}, the collection of Sobol' indices $\Sb$ is uniquely defined as:
\begin{equation}
    \label{eq:caracterisation_sb}
    \Sb=\arg\min_{s\in \Delta_q } \Phi^{a}(s), 
\end{equation}
where $\Phi^{a}$ can be written as an strongly convex  function represented as an expectation:
\begin{equation}
    \label{def:Phi-a_esp}\Phi^{a}(s)= \E\left[\left((Y-\E[Y]) [\Mbm s]_U -(Y^{(U)}-\E[Y])\right)^2\right],
\end{equation}
where $U \sim \mathcal{L}^a_{\Ups}$ and conditionally to $U$, $(Y,Y^U)$ is a Pick-Freeze sample (see Proposition \ref{prop:PickFreeze}).
\end{cor}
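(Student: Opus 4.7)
The plan is to build the characterization from the one-dimensional variational formula of Proposition \ref{cor:variational_sobol}(ii) and then lift it to the full collection of indices through the linear bijection defined by the Rota--M\"obius matrix $\Mb$.

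First I would show that $\mathbf{\Sc}$ is the unique unconstrained minimizer of $\Psi^{a}$ on $\RR^{\Ups}$. Each $\psi_u$ is a real quadratic in one variable with strictly positive leading coefficient $\Var(Y)$, hence strongly convex with unique minimizer $\Sc_u$ by Proposition \ref{cor:variational_sobol}(ii). Since $\min_{u}a_u>0$, the weighted sum $\Psi^{a}(x)=\tfrac{1}{2}\sum_{u\in\Ups}a_u\psi_u(x_u)$ is separable in the coordinates $(x_u)_{u\in\Ups}$ and therefore its unique minimizer over $\RR^{\Ups}$ is the vector $(\Sc_u)_{u\in\Ups}=\mathbf{\Sc}$, with strong convexity constant proportional to $\min_u a_u \cdot \Var(Y)$.

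Second, I would transfer this result to $\Phi^{a}$ through $\Mbm$. The matrix $\Mb$ defined by \eqref{def:M} is triangular with unit diagonal, so $\Mbm$ is well-defined (with explicit expression given by \eqref{def:M_inv}) and $s\mapsto \Mbm s$ is a linear bijection of $\RR^{\Ups}$. Composition of a strongly convex quadratic with an invertible linear map preserves strong convexity (with the constant rescaled by the smallest eigenvalue of $(\Mbm)^{\top}\Mbm$, which is strictly positive), so $\Phi^{a}=\Psi^{a}\circ\Mbm$ is strongly convex on $\RR^{\Ups}$ and its unique unconstrained minimizer is $\Mb\mathbf{\Sc}=\Sb$. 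The Hoeffding decomposition \eqref{eq:Hoeff} guarantees that $\Sb\in\Delta_q$, so the unconstrained optimum already lies in the feasible set and therefore also solves the constrained problem \eqref{eq:caracterisation_sb} uniquely.

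Finally, the expectation representation \eqref{def:Phi-a_esp} follows by a direct conditioning argument: if $U\sim\mathcal{L}^a_{\Ups}$ so that $\P(U=u)=a_u$, then by construction $(Y,Y^{(U)})$ is, conditionally on $U=u$, a Pick--Freeze sample associated with $u$, hence
\begin{equation*}
\E\!\left[\left((Y-\E[Y])[\Mbm s]_U-(Y^{(U)}-\E[Y])\right)^{2}\right]
=\sum_{u\in\Ups}a_u\,\psi_u\!\left([\Mbm s]_u\right),
\end{equation*}
which coincides with $\Phi^{a}(s)$ up to the normalization chosen in \eqref{def:psi-a}.

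The argument is essentially a clean assembly of known ingredients rather than a technically hard proof; the only point requiring any care is the verification that the convex feasible set $\Delta_q$ contains the unconstrained optimum, which removes any need for Lagrange multipliers or projection analysis and is immediate from the Hoeffding decomposition. I expect no serious obstacle beyond keeping the bookkeeping between $\mathbf{\Sc}$ and $\Sb$ (linked via $\Mb$) consistent throughout.
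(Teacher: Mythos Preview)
Your proposal is correct and follows essentially the same route as the paper: the paper derives the corollary informally in the discussion preceding it, by observing that each $\psi_u$ is strongly convex with minimizer $\Sc_u$, that $\Psi^a$ is therefore minimized at $\mathbf{\Sc}$, and that the linear change of variables $s\mapsto\Mbm s$ together with $\Sb\in\Delta_q$ yields the constrained characterization. Your observation about the factor $1/2$ between \eqref{def:psi-a} and \eqref{def:Phi-a_esp} is also accurate; this is a harmless normalization inconsistency in the paper that does not affect the argmin.
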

Corollary \ref{theo:variational_sobol} shows that $\Sb$ minimizes the function $\Phi^a$, which is strongly convex and differentiable, over   $\Delta_q$. Moreover, $\Phi^a$ is the expectation of a strongly convex function that can be readily simulated using Pick-Freeze realizations. Consequently, the problem of estimating all Sobol' indices can be addressed via a constrained stochastic optimization algorithm. This is precisely the approach we adopt in the following, employing a Stochastic Mirror Descent algorithm detailed in the next section.

\section{Online estimation of Sobol' indices with Pick-Freeze S.M.D.}
\label{Sec:sgd_sobol}
Our estimation strategy is based on a mirror descent introduced in the pioneering work of \cite{NY83}; it provides a versatile approach that naturally handles constrained optimization problems through an appropriate mirror map. Mirror descent can be seen as a natural generalization of the gradient descent method, formalizable as an iterative Maximization-Minimization procedure. It is particularly suited for the smooth minimization of a strongly convex function $\Phi^a$ over the constrained set $\Delta_q$ (see, \textit{e.g.}, \cite{Lan:2012}, \cite{Bubeck:2015}). The mirror descent algorithm defines a smooth trajectory that remains within the constrained set without requiring an additional projection step and effectively "pushes" the boundaries of the simplex to an infinite distance from any point strictly inside $\Delta_q$. Moreover, it can be easily extended to a \textit{stochastic} setting using iterative noisy realizations of the gradients.

\subsection{Bregman divergence on $\Delta_q$ and  mirror descent}
We first introduce the strongly convex negative entropy function over $\Delta_q$:
\begin{equation}\label{def:phi}
\forall v \in \Delta_q: \qquad h(v) := \sum_{i=1}^q v_i \log v_i.
\end{equation}
If $\langle \cdot , \cdot \rangle$ denotes the standard Euclidean inner product, the $h$-Bregman divergence $\Dp$ between two probability distributions is defined as
\begin{equation*}\label{def:Dphi}
\forall (w,v) \in \Delta_q^2, \qquad \Dp(w,v) := h(w) - h(v) - \langle \nabla h(v), w - v \rangle.
\end{equation*}

\begin{rem}
 A Bregman divergence induces a natural metric associated to $u \in \Delta_q$ while we emphasize that the strong convexity of $h$ induces the following lower bound:
 \begin{equation}
     \label{eq:Dp_lower}
      \Dp(w,v) \ge \frac{\|w-v\|^2}{2}.
 \end{equation}
\end{rem}
A (deterministic) mirror descent with a step-size sequence $(\gamma_n)_{n \ge 1}$ consists of minimizing, from $n$ to $n+1$, the first-order Taylor approximation of $\Phi^a$ penalized by the Bregman divergence. The iterative step corresponds to a sequence $(\theta_n)_{n \ge 1}$:
\begin{equation}\label{def:MD}
\theta_{n+1} = \arg \min_{\theta \in \Delta_q} \left\{ \Phi^a(\theta_n) + \langle \nabla \Phi^a(\theta_n), \theta - \theta_n \rangle + \frac{1}{\gamma_{n+1}} \Dp(\theta, \theta_n) \right\}.
\end{equation}
Equation \eqref{def:MD} defines a general ``proximal'' descent method built using the Bregman divergence $\Dp$ on the function $\Phi^a$. When $\Dp$ is replaced by the standard Euclidean distance, this reduces to the standard gradient descent with step-size sequence $(\gamma_n)_{n \ge 1}$.

{
Since the function 
$
\theta \longmapsto \Phi^a(\theta_n) + \langle \nabla \Phi^a(\theta_n), \theta - \theta_n \rangle + \frac{1}{\gamma_{n+1}} \Dp(\theta, \theta_n)
$
is strictly convex, the iterate $\theta_{n+1}$ is characterized by the first-order optimality condition. Using the identity 
$
\nabla_\theta \Dp(\theta, \theta_n) = \nabla h(\theta) - \nabla h(\theta_n),
$
we obtain that $\theta_{n+1}$ satisfies
\[
\nabla \Phi^a(\theta_n) + \frac{1}{\gamma_{n+1}}\left(\nabla h(\theta_{n+1}) - \nabla h(\theta_n)\right) = 0.
\]
Rearranging the terms yields
\[
\nabla h(\theta_{n+1}) = \nabla h(\theta_n) - \gamma_{n+1}\nabla \Phi^a(\theta_n).
\]
Using the conjugate properties of the convex function $h$ (Fenchel--Legendre transform), this relation can be rewritten as
\[
\theta_{n+1}
= \nabla h^{-1}\left(\nabla h(\theta_n) - \gamma_{n+1}\nabla \Phi^a(\theta_n)\right)
= \nabla h^*\left(\nabla h(\theta_n) - \gamma_{n+1}\nabla \Phi^a(\theta_n)\right).
\]
In the context of our $D_h$-penalized minimization problem, the mappings $\nabla h$ and $\nabla h^*$, which are inverse to each other, provide a natural interpretation of the algorithm: a gradient step is first performed in the dual space (i.e., the image of $\nabla h$), and the new iterate $\theta_{n+1}$ is then obtained by mapping back to the primal space through $\nabla h^*$.}

{
In our setting, where $h$ is given by the entropy defined in \eqref{def:phi}, a remarkable property is that $\nabla h^*$ admits an explicit expression; see \textit{e.g.} \cite{Lan:2012,CGH24}. This allows us to obtain the following explicit update:
\begin{equation}
    \label{eq:MD_explicit}
    \theta_{n+1} = \frac{\theta_n e^{-\gamma_{n+1} \nabla \Phi^a(\theta_n)}}{\|\theta_n e^{-\gamma_{n+1} \nabla \Phi^a(\theta_n)}\|_1},
\end{equation}
where the above relation is understood componentwise in $\mathbb{R}^q$. We refer to \cite{Lan:2012} for further details.}

\subsection{The Pick-Freeze Mirror algorithm}
An important feature of the definition of $\Phi^a$ is that this function can be expressed as an expectation over a triplet of random variables $(U,Y,Y^U)$, which are assumed to be easily sampled in our context.
To design the Pick-Freeze Mirror algorithm, we begin with the computation of $\nabla \Phi^a$.
\begin{prop}[Computation of $\nabla\Phi^a$]
\label{prop:gradient_Phi_a}
\label{prop:gradient_estimator_unbiased}
Assume that $U$ is a random subset sampled according to $\mathcal{L}^a_{\Ups}$ and conditionally on $U$, $(Y,Y^U)$ is a Pick-Freeze sample, then for any $s \in \Delta_q$, one has:
\begin{align*}
\nabla \Phi^{a}(s)&=
 \nabla_s \mathbb{E}[\phi(s,Y,Y^U,U)] = \mathbb{E}\left[\nabla_s \phi(s,Y,Y^U,U)\right]
 \end{align*}
 where $\phi(s,Y,Y^U,U)$ is the random variable defined by:
 \begin{equation}\label{def:phi-sto}
\nabla \phi(s,Y,Y^U,U):= (Y-\E[Y]) M^{-1}_{U,:} \left((Y-\E[Y])[M^{-1}s]_U-(Y^U-\E[Y]) \right).
\end{equation}

\end{prop}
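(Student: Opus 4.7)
The plan is to identify the random integrand $\phi(s,Y,Y^U,U)$ from the expectation representation of $\Phi^a$ already established in Corollary \ref{theo:variational_sobol}, differentiate it in $s$ by the chain rule, and then justify the exchange of differentiation and expectation. The statement is essentially a computational one, so the proof splits cleanly into two steps.

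First I would write
$$\phi(s,Y,Y^U,U) := \tfrac{1}{2}\bigl((Y-\mathbb{E}[Y])[M^{-1}s]_U - (Y^U-\mathbb{E}[Y])\bigr)^2,$$
so that $\Phi^a(s) = \mathbb{E}[\phi(s,Y,Y^U,U)]$ by Corollary \ref{theo:variational_sobol} (absorbing the $\tfrac12$ of \eqref{def:psi-a} into $\phi$). All the $s$-dependence sits in the linear map $s \mapsto [M^{-1}s]_U = \sum_{v\in\Upsilon} M^{-1}_{U,v}\,s_v$, whose gradient is just the $U$-indexed row $M^{-1}_{U,:}$ of $M^{-1}$. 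The chain rule for the squared Euclidean norm then produces
$$\nabla_s \phi(s,Y,Y^U,U) = (Y-\mathbb{E}[Y])\,M^{-1}_{U,:}\bigl((Y-\mathbb{E}[Y])[M^{-1}s]_U - (Y^U-\mathbb{E}[Y])\bigr),$$
which is exactly formula \eqref{def:phi-sto}.

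Second, I would justify the interchange $\nabla_s \mathbb{E}[\phi(s,\cdot)] = \mathbb{E}[\nabla_s \phi(s,\cdot)]$ by a standard domination argument. Since $\phi$ is quadratic in $s$ with random coefficients and $M^{-1}$ has $0/1$ entries by \eqref{def:M_inv}, the gradient $\nabla_s\phi$ is affine in $s$ and, on any compact neighborhood of $s\in\Delta_q$, its Euclidean norm is dominated by a constant multiple of $(Y-\mathbb{E}[Y])^2 + |Y-\mathbb{E}[Y]|\,|Y^U-\mathbb{E}[Y]|$. Cauchy--Schwarz together with the moment assumption \eqref{eq:moment4} (which in particular gives $\mathbb{E}[Y^2]<\infty$, and since $Y^U$ has the same law as $Y$ also $\mathbb{E}[(Y^U)^2]<\infty$) makes this upper bound integrable, so Leibniz's rule applies.

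I do not anticipate any real obstacle in this proof: the only care required lies in correctly identifying the row $M^{-1}_{U,:}$ as the $s$-gradient of $[M^{-1}s]_U$, which is immediate from linearity, and in the routine integrability check above. The explicit quadratic structure of $\phi$ makes both steps essentially mechanical.
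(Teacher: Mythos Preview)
Your proposal is correct and follows essentially the same route as the paper: identify the linear $s$-dependence through $[M^{-1}s]_U$, differentiate the quadratic integrand via the chain rule, and read off the row $M^{-1}_{U,:}$ as the gradient. The only cosmetic difference is that the paper expands $\Phi^a(s+h)-\Phi^a(s)$ directly on the sum $\sum_u a_u(\cdots)$ and extracts the linear term in $h$, whereas you differentiate the random integrand and add an explicit domination argument to justify the Leibniz interchange; your version is in fact slightly more careful on that point.
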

{
The first contribution of the previous proposition is to provide an exact expression for the gradient of the function $\Phi^a$ for any value of $s$. In principle, it would also be possible to use this expression to implement a gradient descent method for the functional $\Phi^a$. However, a careful inspection of the gradient formula reveals that it is—or would be—necessary to perform $2^p$ Pick-Freeze evaluations by enumerating all subsets 
$U$, each coupled with an additional Monte-Carlo routine to approximate the expectation of each random variable. Since this strategy becomes numerically impractical as soon as the number of variables  $p$ exceeds 10, it is therefore necessary, in order to define an effective algorithm, to replace both the Monte Carlo step and the enumeration of subsets with a stochastic simulation strategy. This is made possible by noting that the gradient can be written as an expectation with respect to $U, Y$ and $Y^U$. 
Furthermore, in Algorithm 1 (see below), we use only a single evaluation of $U, Y$ and $Y^U$
 to efficiently update our Sobol index estimates. In parallel with this cheap gradient strategy, we also leverage the simplex constraint to update all coordinates of the current estimate with just one Pick-Freeze evaluation.
}

In our setting, an additional challenge arises from the fact that $\mathbb{E}[Y]$, which is involved in the computation of $\nabla \Phi^a$ via Equation \eqref{def:phi-sto}, is unknown and must be estimated online. Consequently, we introduce a biased Stochastic Mirror Descent algorithm, which jointly estimates both the mean of $Y$ and all Sobol' indices. This biased framework is similar to that used in \cite{CGH24}.
\paragraph{Description of the algorithm with unknown mean}

We assume that we have at our disposal a sequence of i.i.d. random variables $(\Xn,\Xpn,\Un)_{n \ge 1}$, where $(\Xn,\Xpn,\Un) \sim \QQ \otimes \QQ \otimes \mathcal{L}^a_{\Ups}$, and $\mathcal{L}^a_{\Ups}$ denotes the discrete distribution over $\Ups$ such that
$$
\forall n \ge 1 \quad \forall u \in \Ups \qquad 
\mathbb{P}\left( \Un = u\right) = a_u.
$$
We can therefore compute, for each $n \ge 1$, a Pick-Freeze sample $(\Yn,\YUn) = (f(\Xn), f(\Xn^{U_n}))$, as stated in Corollary \ref{theo:variational_sobol}. In particular, Proposition \ref{prop:gradient_estimator_unbiased} shows that ${\nabla}\phi(s,\Yn,\YUn,\Un)$ is an unbiased estimator of the gradient $\nabla \Phi^a$ for any $s \in \Delta_q$.
We estimate $\mathbb{E}[Y]$ using the empirical mean of the samples $(\Yn)$. Let $\mn$ be defined by

\[ \mn=\frac{1}{n}\sum_{k=1}^n Y_k.
\]

Then, based on Proposition \ref{prop:gradient_estimator_unbiased} and, specifically, Equation \eqref{def:phi-sto}, a recursive estimator can be derived by substituting the unknown mean with its empirical counterpart $\mn$:
\begin{equation}
\label{eq:gradient_estimator}
\widehat{\nabla \phi}_{n+1}(s) =
(\Ynp - \mn) M^{-1}_{\Unp,:} \left[ (\Ynp - \mn) [M^{-1}s]_{\Unp} - (\YUnp - \mn) \right].
\end{equation}

We emphasize that in Equation \eqref{eq:gradient_estimator} above, we use $\mn$ and not $\mnp$ to compute $\widehat{\nabla \phi}_{n+1}$, for the sake of the simplicity from a mathematical point of view.
\medskip

Finally we can define the recursive sequence $(\Sbn)_{n\ge0}$ as:
\begin{equation} 
\label{eq:MD}
\Sbnp= \arg \min_{s \in \Delta_q } \left\{ \langle 
\widehat{\nabla \phi}_{n+1}(\Sbn),s -\Sbn \rangle + \frac{1}{\eta_{n+1}} \Dp(s,\Sbn)\right\} = \frac{\Sbn e^{-\eta_{n+1} \widehat{\nabla\phi}_{n+1}(\Sbn) }} {\|\Sbn e^{-\eta_{n+1} \widehat{\nabla \phi}_{n+1}(\Sbn)}\|_1}
.
\end{equation}

The Pick-Freeze Mirror algorithm then derives from a sequence of iterations that respectively computes $\mn$, samples $(\Yn,\YUn,\Un)$ and uses \eqref{eq:MD}. This new method is summarized in Algorithm
\ref{algo:Black-Mirror}.

\begin{algorithm}[h!]
\caption{Pick-Freeze Mirror algorithm}\label{algo:Black-Mirror}
\begin{algorithmic}
\REQUIRE{ $N \geq 0$, Step-size Sequence $(\gamma_n)_{n \ge 1}$}
\STATE{ $\hat{m}_0 \gets 0$}
\STATE{ $\hat{\mathbf{S}}^\emptyset_0 \gets 0$ }
\STATE{ $\hat{\mathbf{S}}^*_0 \gets \frac{\mathbf{1}_{q-1}}{q-1}$}\hfill\COMMENT{  Initialization with a uniform distribution of weights over $\Delta_q$}
\FOR{$n \gets 1$ to $N$}
 \STATE{ Sample $(\Xnp,\Xpnp,\Unp) \sim \QQ \otimes \QQ \otimes \mathcal{L}^a_{\Ups} $ and compute $\Ynp,\YUnp$}
 \STATE{ Update the \textbf{estimator} of the expectation of $Y$:
 $$
 \mnp = \mn + \frac{1}{n+1}(\Ynp-\mn)
$$ }
\hfill\COMMENT{Empirical mean estimation}
\STATE{ Compute the estimator of  $\nabla \Phi^a$ at step $n$:
$$
\Hhnp(\Sbn)=
        (\Ynp-\mn) M^{-1}_{\Unp,:} \left[(\Ynp-\mn)[M^{-1}\Sbn]_{\Unp}-(\YUnp-\mn)\right]
 $$}
\hfill \COMMENT{ Gradient estimation}
 \STATE{ Update the estimator of the collection of Sobol' indices:
$$
\Sbnp = 
\frac{\Sbn e^{-\eta_{n+1} \widehat{\nabla\phi}_{n+1}(\Sbn) }} {\|\Sbn e^{-\eta_{n+1} \widehat{\nabla \phi}_{n+1}(\Sbn)}\|_1}$$}
\hfill\COMMENT{Stochastic Mirror Descent}
\ENDFOR
\end{algorithmic}
\end{algorithm}

\subsection{Theoretical results}
We state below our theoretical results.

\paragraph{Almost sure convergence}

Theorem \ref{theo:cv_ps}  establishes the convergence of $(\Sbn)_{n \ge 1}$ to the set of Sobol' indices $\Sb$ under weak assumptions on the sampling process (finite fourth moment of $Y$) and on the step-size sequence. In particular, our result does not require any smoothness assumptions on the numerical code $f$, unlike recent contributions \cite{da2009local,da2008efficient,plischke2020fighting,solis2021non,heredia2021nonparametric}, which use a kernel-based approach that inherently imposes regularity conditions on the numerical code.
\begin{theo}[Almost sure convergence of $(\Sbn)_{n \ge 0}$  \label{theo:cv_ps}]
Assume that the sequence $(\eta_n)_{n \ge 1}$ satisfies:
\begin{equation}
    \label{hyp:pas}
\sum\eta_{n+1}=\infty,\qquad \text{and} \qquad \sum \eta_{n+1}^2<\infty.
\end{equation}
Assume furthermore that $Y$ has a 4-th order moment and that  the discrete probability distribution $a$ is lower bounded:
$
\min_{u \in \Ups} a_u >0,
$
{and that all the coordinates of 
$\hat{\mathbf{S}}_0$ are strictly positive, 
then $(\Sbn)_{n \ge 1}$ almost surely converges towards  $\Sb$}.
\end{theo}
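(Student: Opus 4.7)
The plan is to apply the Robbins--Siegmund almost-sure convergence lemma to the Lyapunov function $V_n := \Dp(\Sb, \Sbn)$ with respect to the natural filtration $\Fn := \sigma((X_k, X'_k, U_k)_{k\le n})$. Write $\mu := \E[Y]$ and $\delta_n := \mn - \mu$. The standard one-step mirror-descent inequality, applied to the update \eqref{eq:MD} at the point $\Sb\in\Delta_q$, gives
\begin{equation*}
V_{n+1} \le V_n - \eta_{n+1}\langle\Hhnp(\Sbn),\Sbn - \Sb\rangle + \tfrac12\eta_{n+1}^2\|\Hhnp(\Sbn)\|_*^2.
\end{equation*}
The whole strategy then reduces to controlling the bias of the plug-in gradient $\Hhnp$, and to this end I decompose $\Hhnp(\Sbn) = \nabla\phi(\Sbn,\Ynp,\YUnp,\Unp) + r_{n+1}$, where the first term is the unbiased oracle gradient of Proposition \ref{prop:gradient_estimator_unbiased} and $r_{n+1}$ is the substitution error produced by replacing $\mu$ by $\mn$ in \eqref{eq:gradient_estimator}.

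Expanding \eqref{eq:gradient_estimator} splits $r_{n+1}$ into a piece linear in $\delta_n$ and a piece quadratic in $\delta_n$. The crucial observation is that the conditional expectation of the linear piece vanishes: each of its terms carries either a factor $\Ynp - \mu$ (centred and independent of $\Unp$ and of $\Fn$) or a factor $\YUnp - \mu$ whose conditional expectation given $\Unp = u$ is zero because $\YUnp$ has the same marginal as $\Ynp$ by the Pick--Freeze construction. Since the quadratic piece is bounded by a deterministic constant (the rows of $M^{-1}$ have $\{0,1\}$ entries and $[M^{-1}s]_u\in[0,1]$ on $\Delta_q$), this yields $\|\E[r_{n+1}\mid\Fn]\|\le C\delta_n^2$ uniformly in $\Sbn\in\Delta_q$. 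Combining this with the convex bound $\langle\nabla\Phi^a(\Sbn),\Sbn-\Sb\rangle\ge \Phi^a(\Sbn) - \Phi^a(\Sb)\ge 0$ and with the fourth-moment assumption \eqref{eq:moment4} (used to bound $\E[\|\Hhnp(\Sbn)\|_*^2\mid\Fn]$, which is a polynomial of total degree four in $\Ynp,\YUnp,\mn$), I obtain the Robbins--Siegmund-type recursion
\begin{equation*}
\E[V_{n+1}\mid\Fn] \le V_n - \eta_{n+1}\bigl(\Phi^a(\Sbn) - \Phi^a(\Sb)\bigr) + C\eta_{n+1}\delta_n^2 + C'\bigl(1+|\mn|^4\bigr)\eta_{n+1}^2.
\end{equation*}

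Since $\delta_n$ is the centred empirical mean of $n$ i.i.d.\ variables of finite variance, $\E[\delta_n^2]\le \Var(Y)/n$, and Cauchy--Schwarz gives $\sum_n\eta_{n+1}/n \le (\sum_n\eta_{n+1}^2)^{1/2}(\sum_n 1/n^2)^{1/2}<\infty$, so $\sum_n\eta_{n+1}\delta_n^2<\infty$ almost surely. The strong law of large numbers yields $\sup_n|\mn|<\infty$ a.s., which together with $\sum\eta_{n+1}^2<\infty$ makes the remaining perturbation $(1+|\mn|^4)\eta_{n+1}^2$ summable a.s. Robbins--Siegmund therefore delivers the almost-sure convergence of $V_n$ to a finite limit $V_\infty$ and the summability $\sum_n\eta_{n+1}(\Phi^a(\Sbn)-\Phi^a(\Sb))<\infty$ a.s. The divergence of $\sum\eta_{n+1}$ then forces $\liminf_n\Phi^a(\Sbn)=\Phi^a(\Sb)$ a.s.; strong convexity of $\Phi^a$ on $\Delta_q$ extracts a subsequence along which $\Sbn\to\Sb$, and continuity of $\Dp(\Sb,\cdot)$ at $\Sb$ identifies $V_\infty = \Dp(\Sb,\Sb)=0$. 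The lower bound \eqref{eq:Dp_lower} finally upgrades $V_n\to 0$ into $\Sbn\to\Sb$ a.s.

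The hard part will be the bias analysis of the plug-in gradient: if the first-order bias in $\delta_n$ did not cancel in conditional expectation, one would need $\sum\eta_{n+1}|\delta_n|<\infty$ a.s., which generally does not follow from $\sum\eta_{n+1}^2<\infty$ alone because $\delta_n$ is only of order $n^{-1/2}$. The vanishing of the first-order bias, itself a direct consequence of the zero-mean centring built into the Pick--Freeze sample, is the mechanism that allows the biased Algorithm \ref{algo:Black-Mirror} to inherit the convergence theory of an unbiased stochastic mirror descent.
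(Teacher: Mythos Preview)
Your proposal is correct and follows essentially the same approach as the paper's proof: the one-step mirror inequality for $\Dp(\Sb,\Sbn)$, the key observation that the first-order bias in $\delta_n=\mn-\mu$ vanishes in conditional expectation so that only an $O(\delta_n^2)$ term survives (this is the content of the paper's Lemma~\ref{lem:R_n}), the application of Robbins--Siegmund, and the identification of the limit via strong convexity of $\Phi^a$ and continuity of $\Dp(\Sb,\cdot)$. The only cosmetic differences are that the paper routes the bias through a multiplicative $(1+\tfrac12\eta_{n+1}\|R_{n+1}\|_2)$ factor via Young's inequality whereas you exploit the boundedness of $\|\Sbn-\Sb\|$ on the simplex directly, and that the paper bounds $\E[\|\Hhnp(\Sbn)\|_2^2]$ in full expectation (using a Rosenthal-type inequality on $\E[(\mn-\mu)^4]$) whereas you condition first and invoke the SLLN for $\sup_n|\mn|$.
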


These results hold for a standard setup in stochastic algorithms with a decreasing learning rate $(\eta_n)_{n \ge 1}$. The key condition is the convergence of the series $\sum_{n \ge 1} \eta_n^2$ together with the divergence of $\sum_{n \ge 1} \eta_n$. A typical application of Theorem \ref{theo:cv_ps} corresponds to $\eta_n = \eta_0 n^{-\alpha}$ with $\alpha \in (1/2,1]$. 
{The previous result guarantees the convergence of the algorithm for any initialization, under the assumption that none of the coordinates is equal to zero. In this sense, the behavior of the algorithm is therefore insensitive to the initialization over an infinite (asymptotic) horizon. By contrast, the influence of the initialization appears in the non-asymptotic bounds given in our next result.}

\paragraph{Non-asymptotic upper bound}
Using the perspective introduced in \cite{moulines2011non,doi:10.1137/120880811} to assess the computational cost of convex stochastic optimization, it is possible to derive a more quantitative result on the sequence $(\Sbn)_{n \ge 1}$. This result is expressed in terms of the expected value of $\Phi^a$ throughout the algorithm.
For this purpose, we introduce below a quantity related to the discrete probability distribution used for sampling the subsets $U$ according to $a$:
\begin{equation}
|a|_{\exp,\ell} := \mathbb{E}\left[ 2^{\ell |U|} \right] = \sum_{u \in \Ups} a_u 2^{\ell |u|}.
\label{eq:norme-a-power}
\end{equation}

\begin{theo}[Non-asymptotic risk bound\label{theo:sobol_non_asympt}]
 Consider any finite horizon $n>0$ and the sequence $(\bar{\Sbf}^{\eta}_{k})_{k \ge 1}$, the Cesaro averaged sequence built from the sequence $(\Sbk)_{k \ge 1}$:
 $$
\bar{\Sbf}^{\eta}_{n} = \frac{\sum_{k=1}^n \eta_{k+1} \Sbk}{\sum_{k=1}^n \eta_{k+1}}.
$$
\begin{itemize}
    \item[$i)$] Assume that $(\eta_{k+1})_{k\ge0}$ is piecewise constant defined by:
$$ \eta_{k+1}=\frac{\mathbf{1}_{k \leq n-1}}{\sqrt{(1+\sqrt{\|a\|_{\exp,2}}) n}},$$
then
\begin{align*}
 \E[\Phi^a(\bar{\Sbf}^{\eta}_{n})]&-\min \Phi^a \\
 &\leq \sqrt{ 1+\sqrt{\|a\|_{\exp,2}}} \left( \Dp(\Sb,\Sbun)  + C \E[Y^4] \right) \frac{1}{\sqrt{n}} + 5\|a\|_{\exp} \Var(Y) \frac{\log(n)}{n}.
\end{align*}
\item[$ii)$] If $(\eta_{k+1})_{k\ge0}$ is horizon free (that does not depend on the final iteration $n$) defined by $
\eta_{k+1} = \eta_0 (k+1)^{-1/2},
$
then $\forall n\in\N$:
\begin{align*}
\E[\Phi^a(\bar{\Sbf}^{\eta}_{n})]&-\min \Phi^a
\leq C \left( \Dp(\Sb,\Sbun)  + \|a\|_{\exp,2} +  \E[Y^4] \right) \frac{\log n}{\sqrt{n}}.
\end{align*}
\end{itemize}
\end{theo}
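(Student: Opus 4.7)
The plan is to adapt the standard stochastic mirror descent analysis (see \cite{Lan:2012,Bubeck:2015}) on the strongly convex objective $\Phi^a$, while paying special attention to the additional bias produced by substituting the empirical mean $\mn$ for $\E[Y]$ inside the gradient oracle. I would start from the classical one-step inequality that follows from the optimality condition characterizing $\Sbnp$ in \eqref{eq:MD}: for every target $s^* \in \Delta_q$ and every iteration $k \ge 1$,
\[
\eta_{k+1}\,\langle \widehat{\nabla \phi}_{k+1}(\Sbk),\Sbk-s^*\rangle
\leq \Dp(s^*,\Sbk)-\Dp(s^*,\Sbnp) + \frac{\eta_{k+1}^2}{2}\,\|\widehat{\nabla \phi}_{k+1}(\Sbk)\|_*^2,
\]
where $\|\cdot\|_*$ is the norm dual to the one in which $h$ is strongly convex on $\Delta_q$, so that \eqref{eq:Dp_lower} holds. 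Taking $s^* = \Sb$, using convexity of $\Phi^a$ to replace $\langle \nabla \Phi^a(\Sbk),\Sbk - \Sb\rangle$ by $\Phi^a(\Sbk) - \Phi^a(\Sb)$, summing over $k = 1,\ldots, n$ to telescope the Bregman divergences, and applying Jensen's inequality to $\Phi^a$ at the Cesaro average $\bar{\Sbf}^{\eta}_n$, produces a raw bound of the form
\[
\E[\Phi^a(\bar{\Sbf}^{\eta}_n)] - \min \Phi^a
\leq \frac{\Dp(\Sb,\Sbun) + \tfrac{1}{2}\sum_{k=1}^n \eta_{k+1}^2\,\E\|\widehat{\nabla \phi}_{k+1}(\Sbk)\|_*^2 + \mathcal{B}_n}{\sum_{k=1}^n \eta_{k+1}},
\]
where $\mathcal{B}_n := \sum_{k=1}^n \eta_{k+1}\,\bigl|\E\langle \nabla \Phi^a(\Sbk) - \widehat{\nabla \phi}_{k+1}(\Sbk),\Sbk - \Sb\rangle\bigr|$ isolates the error due to plug-in estimation of $\E[Y]$, while the middle term is the usual variance cost of the oracle.

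The second step is to control the gradient second moment. Because the $u$-th row of $M^{-1}$ is the $\{0,1\}$-indicator $(\mathbf{1}_{v \subset u})_{v \in \Ups}$, the vector $\nabla \phi(s,Y,Y^U,U)$ is supported on $\{v : v \subset U\}$ with all nonzero coordinates equal to the scalar $A_U := (Y-\E[Y])\bigl((Y-\E[Y])[M^{-1}s]_U - (Y^U-\E[Y])\bigr)$, so $\|\nabla \phi\|_*^2$ carries a power of $2^{|U|}$ via the combinatorial support. Combining the easy bound $|[M^{-1}s]_U| \leq 1$ valid on $\Delta_q$, Cauchy--Schwarz, and independence of $U$ from $(Y,Y^U)$, I would establish
\[
\E\|\nabla \phi(s,Y,Y^U,U)\|_*^2 \;\lesssim\; \sqrt{\|a\|_{\exp,2}}\,\E[Y^4],
\]
which is precisely the origin of the $\|a\|_{\exp,2}$ factor in both step-size calibrations. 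The same type of bound applied to $\widehat{\nabla \phi}_{k+1} - \nabla \phi$ controls the additional variance due to the plug-in and turns out to be of smaller order in $n$.

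For the bias term $\mathcal{B}_n$, I would decompose $\widehat{\nabla \phi}_{k+1}(\Sbk) - \nabla \phi(\Sbk, Y_{k+1}, Y^{U_{k+1}}_{k+1}, U_{k+1})$ into contributions linear and quadratic in $(\mn - \E[Y])$, bound each factor by Cauchy--Schwarz using $\E[(\mn - \E[Y])^2] \leq \Var(Y)/n$, and use the uniform simplex bound $\|\Sbk - \Sb\| \leq 2$ together with $\E\|M^{-1}_{U,:}\|_1 = \|a\|_{\exp,1}$. This yields $\mathcal{B}_n \leq C\,\|a\|_{\exp}\,\Var(Y)\sum_{k=1}^n \eta_{k+1}/\sqrt{k}$. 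To conclude the theorem, it only remains to tune the step size. In case $(i)$, the constant $\eta$ is chosen to balance the leading $\Dp(\Sb,\Sbun)/(n\eta)$ and $\eta\,\E\|\widehat{\nabla \phi}\|_*^2/2$ terms, producing the $\sqrt{1+\sqrt{\|a\|_{\exp,2}}}/\sqrt n$ rate, while the bias sum $\eta \sum_{k=1}^n 1/\sqrt k \sim \eta \sqrt n\log n$ gives, after normalization by $n\eta$, the announced $\log(n)/n$ remainder; in case $(ii)$, the standard estimates $\sum_{k=1}^n \eta_{k+1} \sim 2\eta_0 \sqrt n$ and $\sum_{k=1}^n \eta_{k+1}^2 \sim \eta_0^2 \log n$ yield the horizon-free $\log(n)/\sqrt n$ rate.

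The main obstacle will be the control of $\mathcal{B}_n$. The bias increment $\widehat{\nabla \phi}_{k+1} - \nabla \phi$ is \emph{not} a martingale difference, since $\mn$ couples the entire past $(Y_j)_{j \le n}$ with $\Sbk$, so one cannot simply invoke orthogonality of increments as in the variance analysis. A careful deterministic upper bound of the form ``oracle norm $\times$ mean deviation'', combined with the explicit $1/\sqrt{k}$ decay of $\E|\mn - \E[Y]|$, is needed to ensure the remainder does not dominate the leading $1/\sqrt n$ term. A secondary challenge is tracking the precise powers of $\|a\|_{\exp,\ell}$ that arise: the rank-one structure of $M^{-1}_{U,:}$ makes the gradient norm scale with $2^{|U|/2}$, and an additional Cauchy--Schwarz step is necessary to convert this exponential weight into the advertised $\sqrt{\|a\|_{\exp,2}}$ dependence of the step size.
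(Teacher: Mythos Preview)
Your overall architecture matches the paper's: the one-step mirror inequality, convexity of $\Phi^a$, telescoping, Jensen at the Cesaro average, and the second-moment bound $\E\|\widehat{\nabla\phi}_{k+1}(\Sbk)\|^2 \lesssim (1+\sqrt{\|a\|_{\exp,2}})\,\E[Y^4]$ are all exactly what the paper does (its Lemma~\ref{lem:norm}). The gap is in your treatment of the bias $\mathcal{B}_n$.

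You propose to decompose $\widehat{\nabla\phi}_{k+1}-\nabla\phi$ into linear and quadratic parts in $(\mn-\E[Y])$ and bound both crudely, obtaining a per-step contribution of order $1/\sqrt{k}$. This is too weak: with constant step $\eta$ one gets $\mathcal{B}_n \lesssim \eta\sum_{k=1}^n k^{-1/2} \sim 2\eta\sqrt{n}$, and after normalization by $n\eta$ this is $O(n^{-1/2})$, \emph{not} the announced $\log(n)/n$ remainder. (Your asymptotic $\sum_{k\le n} k^{-1/2}\sim \sqrt{n}\log n$ is also simply wrong; and even that expression divided by $n$ gives $\log(n)/\sqrt{n}$, not $\log(n)/n$.) The missing observation is that the \emph{linear} contribution to the bias vanishes once you take conditional expectation: writing $R_{k+1}=\E[\widehat{\nabla\phi}_{k+1}(\Sbk)\,|\,\mathcal{F}_k]-\nabla\Phi^a(\Sbk)$, the terms proportional to $(\E[Y]-\mn)$ carry factors $\E[Y_{k+1}-\E[Y]\,|\,\mathcal{F}_k]=0$ and $\E[Y^{U_{k+1}}_{k+1}-\E[Y]\,|\,\mathcal{F}_k]=0$, so
\[
R_{k+1}=(\E[Y]-\mn)^2\,\E\bigl[([\Mbm\Sbk]_{U_{k+1}}-1)[\Mbm]_{U_{k+1},:}^{\!T}\,\big|\,\mathcal{F}_k\bigr],
\]
which is purely quadratic. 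This yields $\E\|R_{k+1}\|_2 \leq \|a\|_{\exp,1/2}\,\Var(Y)/k$ (Lemma~\ref{lem:R_n} in the paper), hence a bias sum $\sum_{k\le n}\eta_{k+1}/k$. For constant $\eta$ this is $\eta\log n$, and after dividing by $n\eta$ you recover the $\log(n)/n$ remainder of case~$(i)$; for $\eta_{k+1}=\eta_0(k+1)^{-1/2}$ the sum $\sum k^{-3/2}$ even converges. Without this cancellation you cannot match the stated bound in~$(i)$, and your argument as written does not establish it.
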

 
\section{Experimental results \label{sec:experiments}}
In this section, we explore the numerical performances of our algorithm. 
{Our numerical experiments are conducted solely on simulated scenarios, providing preliminary evidence of the practical effectiveness of our method. In future work, we plan to benchmark the approach on realistic real-world datasets.}
{A python code is accessible online and contains the basic functions to run our algorithm and a notebook with examples \footnote{\url{https://plmlab.math.cnrs.fr/costa2150/mirror-pick-freeze/}}.}
\subsection{Baselines and Competing Methods}
Our study focuses on comparing our method with two existing estimation techniques for Sobol' indices: the standard Pick-Freeze estimator for individual Sobol' indices \cite{janon2012asymptotic}, and the kernel-based estimator of \cite{da_veiga_efficient_2024}. We consider both regular and irregular functions $f$. Let us first briefly discuss the advantages and limitations of these two methods.

\begin{itemize}
    \item The asymptotically efficient version of the Pick-Freeze estimator for closed Sobol' indices studied in \cite{janon2012asymptotic} (referred to as "PF1" below). For each subset $u$, it computes a Monte Carlo estimator $T_{u,N}$ of $\Sigma_u$ based on a Pick-Freeze sample  $(Y_i,Y_i^{(u)})_{1\le i\le N}$ :
    $$T_{u,N} = \frac{\frac{1}{N}\displaystyle\sum_{i=1}^N Y_iY_i^{(u)}- \left(\frac{1}{N}\displaystyle\sum_{i=1}^N \left[\frac{Y_i+Y_i^{(u)}}{2}\right]\right)^2}{\frac{1}{N}\displaystyle\sum_{i=1}^N \frac{Y_i^2+(Y_i^{(u)})^2}{2}- \left(\frac{1}{N}\displaystyle\sum_{i=1}^N \left[\frac{Y_i+Y_i^{(u)}}{2}\right]\right)^2}$$
   \begin{itemize} \item Advantages: The Pick-Freeze estimators are consistent and satisfy a (joint) central limit theorem under the sole assumption that the output has a finite fourth-order moment.
\item Drawbacks: To estimate all $2^{p-1}$ Sobol' indices at a $\sqrt{n}$ rate, a sample of size $2^p n$ is required.
\end{itemize}
    \item The kernel-based estimator given in equation (20) of \cite{da_veiga_efficient_2024}, using an Epanechnikov kernel of order 2 and 4 (see [19] for a definition), with the kernel bandwidth optimized via leave-one-out on the regression function ("Kernel 2" and "Kernel 4"). 
  \begin{itemize}
        \item Advantages: A simple i.i.d. sample of size $n$ is theoretically sufficient to estimate all Sobol' indices at the $\sqrt{n}$ rate.
\item Drawbacks: The input variables are assumed to be compactly supported and absolutely continuous with respect to the Lebesgue measure. Moreover, regularity assumptions on the regression function are also required. Although asymptotic normality is established for all Sobol' indices, the computational cost of evaluating the estimator appears to grow exponentially with the order of the indices.
    \end{itemize}
\end{itemize}
For all these techniques, 50 replicates of the estimators are computed up to step $N_0 = 500$.\
We compare these methods with our Pick-Freeze Mirror Descent algorithm (Algorithm \ref{algo:Black-Mirror}) using a decreasing step sequence $\eta_n = \eta_0 / \sqrt{n+1}$, where $\eta_0$ is a tuning parameter. To fairly compare the results and account for the fact that the Pick-Freeze Mirror algorithm computes all indices simultaneously, we consider the algorithm at times $N_0$ and $2^p N_0$, corresponding respectively to "Mirror$\_$PF-1" and "Mirror$\_$PF-2".

\subsection{Numerical code used and comparison}
\subsubsection{Case of a smooth regular function: the Bratley function}
We first consider the Bratley function, defined as
\[
f_{\text{Bratley}}(X^1, \ldots, X^p) =
\sum_{i=1}^p (-1)^i \prod_{j=1}^i X^j,
\]
where $X^i \sim \mathcal{U}([0, 1])$ are i.i.d. and $p = 5$.
For this function, \cite{da_veiga_efficient_2024} considered the performance of their algorithm for the estimation of both the first-order and total-order indices. Recall that the first-order indices are the Sobol' indices associated with singletons, while the total-order indices are defined as
\[
S^{\text{tot}}_i = 1 - \Sigma^*_{\{1, \ldots, p\} \setminus \{i\}}.
\]
In particular, since the total-order indices depend on the closed Sobol' indices, we estimate them using our sequence $(\Sbn)_{n \ge 1}$ and \eqref{def:M}.

\begin{figure}[h!]
    \centering
    \includegraphics[scale=0.45]{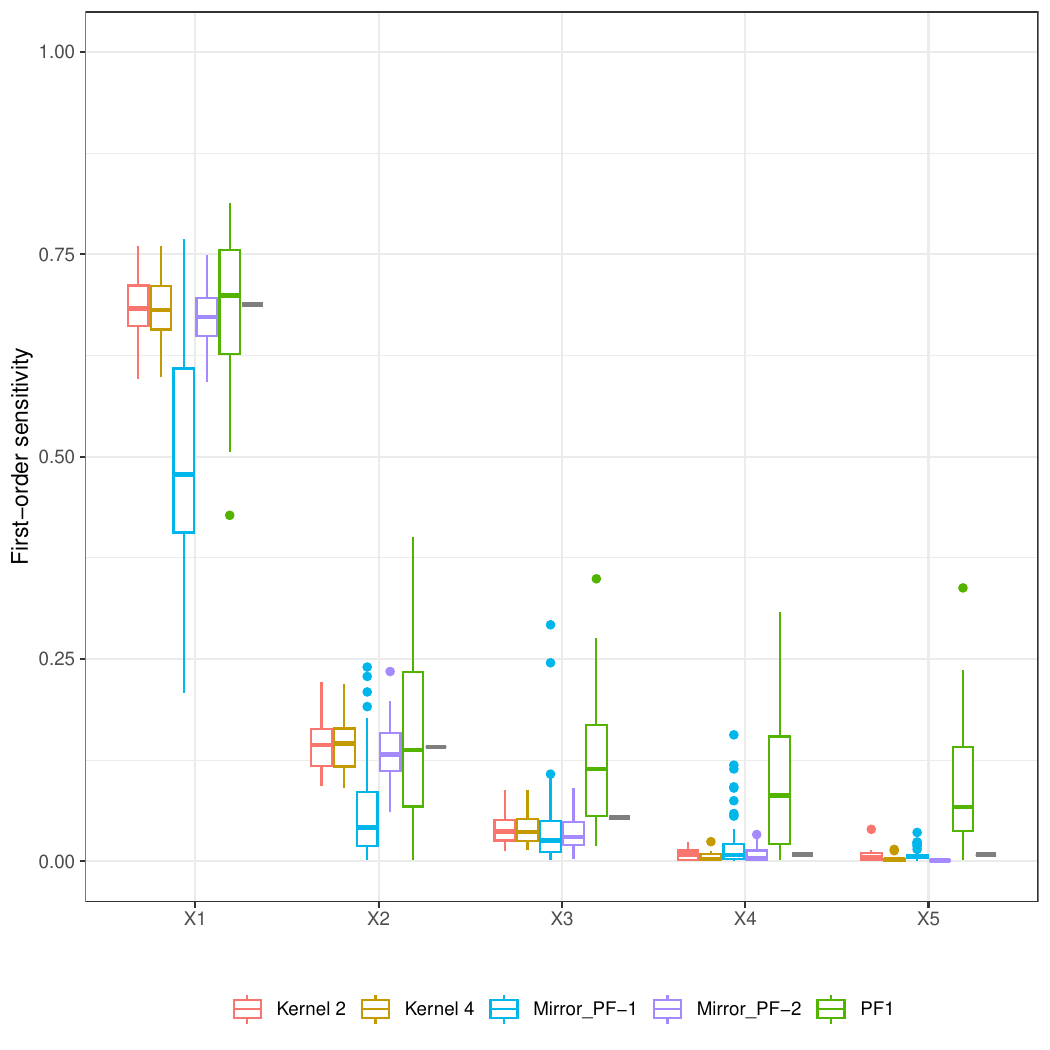}
    \includegraphics[scale=0.45]{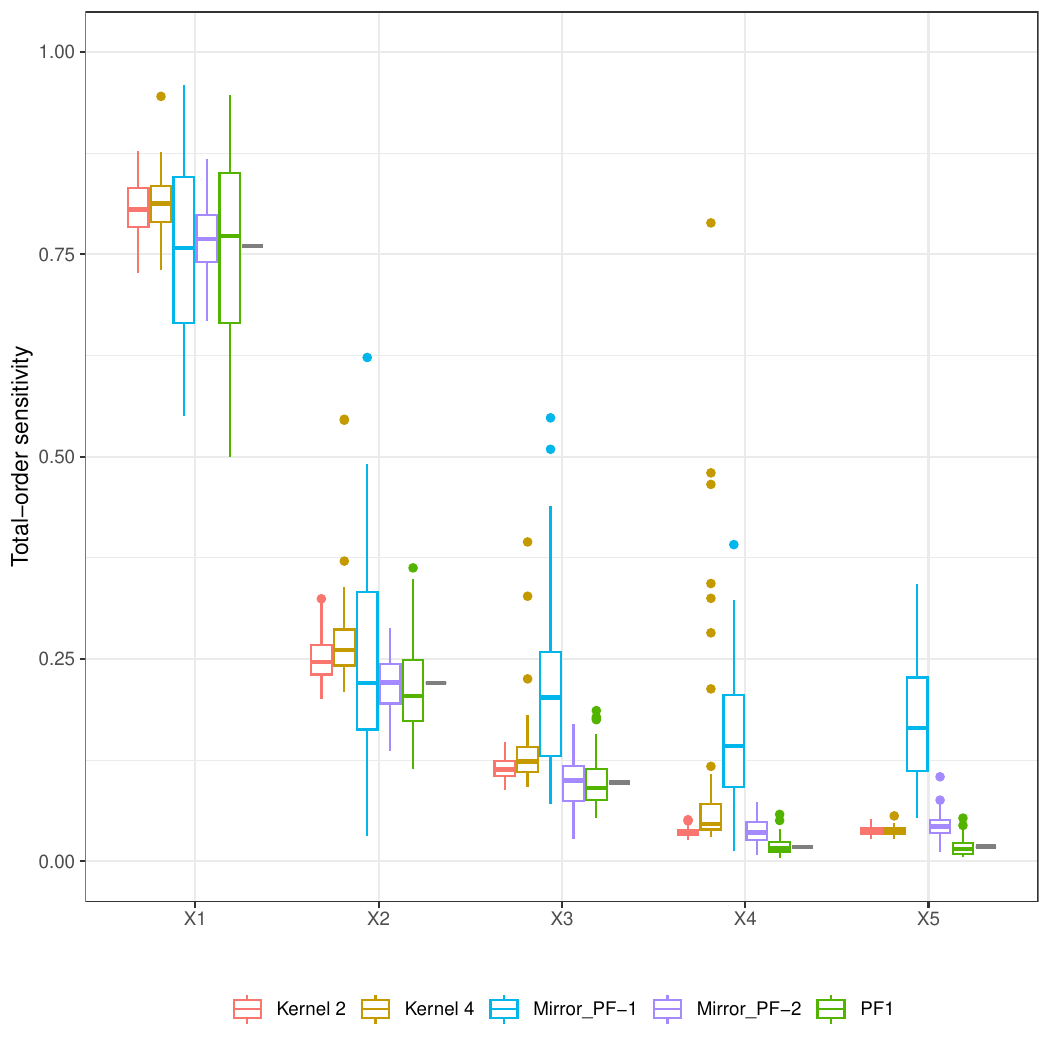}
    \caption{Estimators of the first-order indices (left panel) and total-order indices (right panel) of the Bratley function obtained with different methods. The reference value is shown as a gray line.}
    \label{fig:sobol_bratley}
\end{figure}

\subsubsection{Case of a discontinuous function}
We then consider a discontinuous function defined by 
$$f_{disc}(X^{(1)},X^{(2)},X^{(3)}) = X^{(1)}\un_{X^{(3)}<0}+ (X^{(2)})^2 \un_{X^{(3)}\ge0} + X^{(3)},$$
for $X^i$ i.i.d $\mathcal N(0,1)$ random variables. 
This case is interesting because it does not satisfy the regularity assumptions required for the kernel based methods, nor the assumption on compactly supported random variables.

\begin{figure}[h!]
    \centering
    \includegraphics[width=0.45\linewidth]{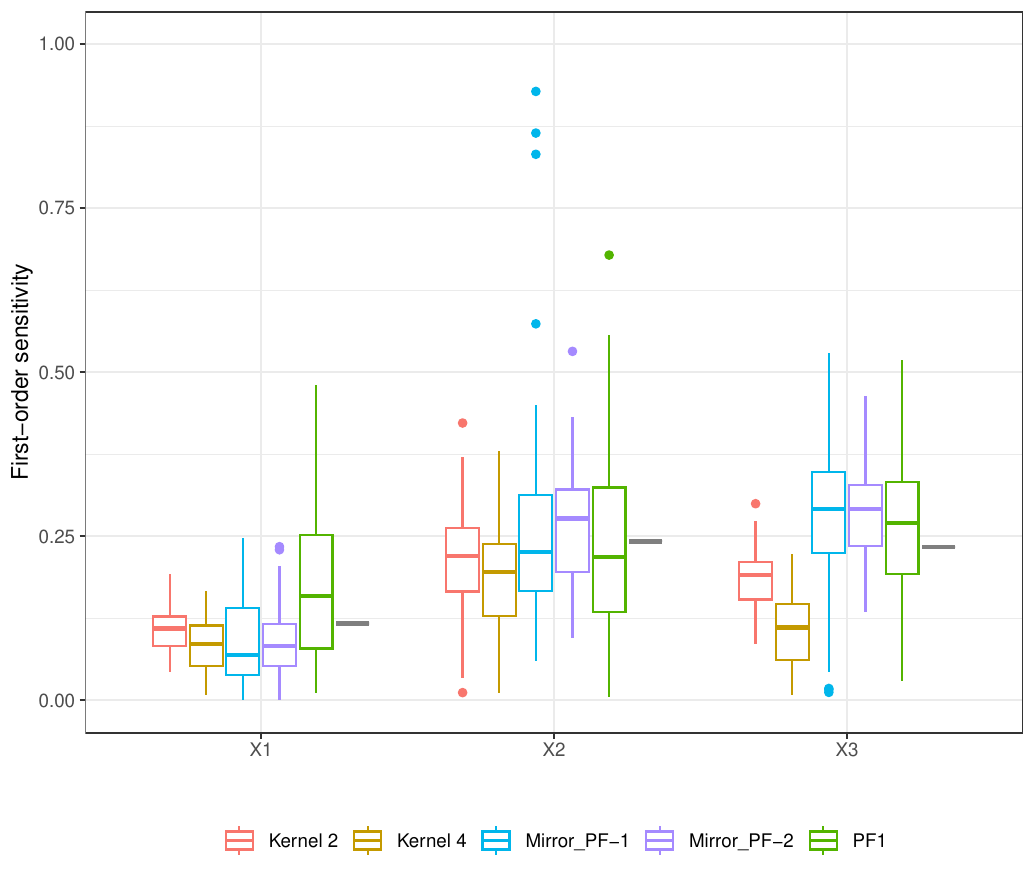} 
    \includegraphics[width=0.45\linewidth]{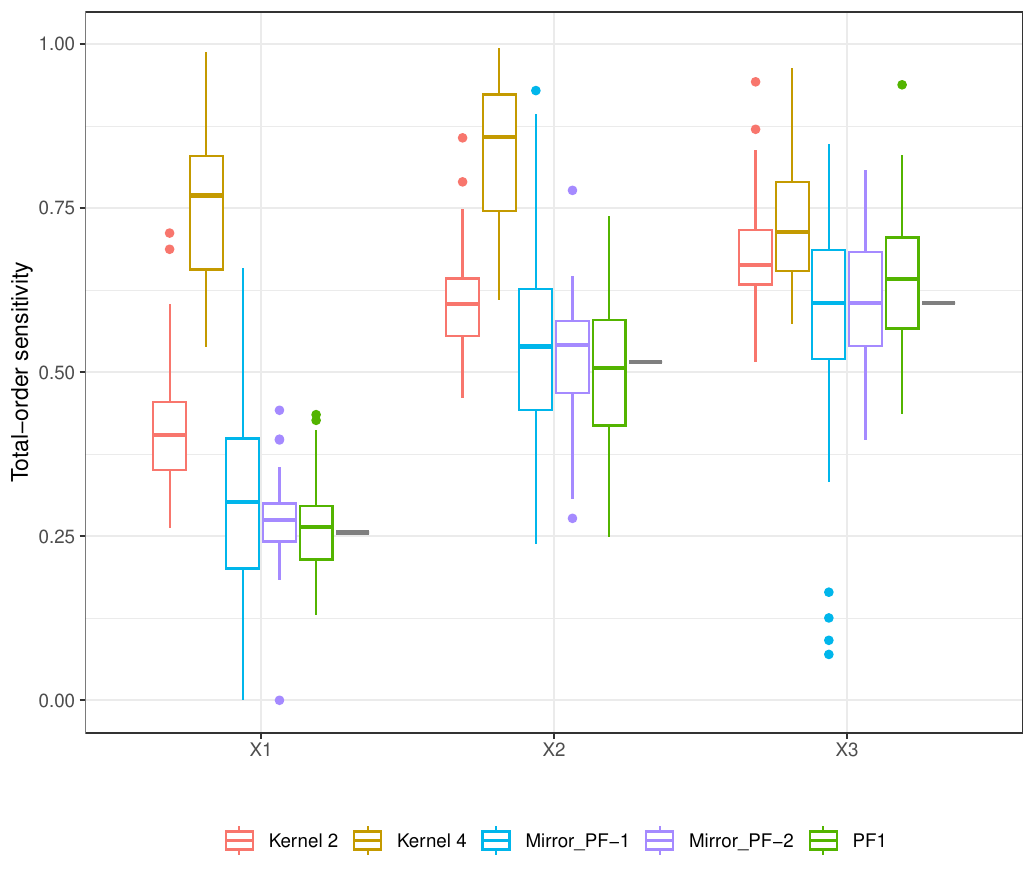}
    \caption{Estimators of the first-order indices (left panel) and the total order indices (right panel) for the function  $f_{disc}$ for the different methods. The reference value is represented with a gray line. }
    \label{fig:sobol_disc}
\end{figure}

\subsubsection{Comparison}

We first observe that, as expected, the "Mirror$\_$PF-1" method performs worse than "Mirror$\_$PF-2". This is entirely reasonable, since the comparison essentially reflects the behavior of our algorithm after 1 and $N_0$ epochs, respectively (where one epoch corresponds to the full set of $2^p$ variables in our problem). Nevertheless, in many cases, "Mirror$\_$PF-1", despite using an extremely small number of iterations, still achieves reasonably good performance given its very low computational cost for estimating \textit{all} Sobol' indices.

Next, it should be noted that the "Mirror$\_$PF-2" method consistently outperforms the baseline "PF1" method, which consists in independently repeating the Pick-Freeze procedure for each variable under consideration: in particular, the variance of "Mirror\_PF-2" are smaller than those of the Pick-Freeze method when considering the mirror algorithm at step $2^p N_0$. This clearly demonstrates the benefit of formulating our problem as a constrained optimization task, which allows all indices to be updated simultaneously.
Moreover, the "Mirror\_PF-1" method with a single epoch sometimes achieves results comparable to the baseline "PF1" (this is in particular the case for the null indices of $X^{(3)}, X^{(4)}$ and $X^{(5)}$ for $f_{\text{Bratley}}$), thus efficiently exploiting the simplex constraint in the search for the Sobol' indices.

Finally, the comparison of our methods with the kernel-based methods Kernel2 and Kernel4 requires more nuance. Indeed, for the regular Bratley function, the two kernel methods produce results that are quite close to each other, while, for an equivalent number of iterations in the algorithm, they perform similarly to the "Mirror$\_$PF-2" method and clearly outperform both "Mirror$\_$PF-1" and $PF1$. However, this observation for the class of regular functions completely disappears in the case of non-regular functions. In this case, we observe that the kernel estimators fail to estimate the correct Sobol' indices, whereas the Mirror Pick-Freeze and the Pick-Freeze methods still provide good results. 
The rather excellent results of kernel methods for smooth functions and the very disappointing performance for discontinuous functions are entirely consistent with the behavior of the kernel-based methods, which rely on a function decomposition that is not valid when the functions are non-smooth.
Finally, let us stress that a major drawback of kernel methods is the cost in memory required to run the estimation. On this aspect, our online algorithm performs at low memory cost even for a large number of input variables.

\subsection{Investigating three different sampling strategies}\label{sec:sampling_strat}
Finally, in this paragraph, we briefly investigate the influence of the sampling strategy. The distribution $a$ determines how our algorithm selects the different subsets throughout the iterations.
We choose to compare uniform sampling ("unif") with two adaptive strategies. In the latter, the distribution $a$ is updated at each step of the algorithm based on the current value of the sequence $\Sbn$. Specifically, we set $a \propto \Sbn$ in strategy "S" and $a \propto 1/\Sbn$ in strategy "1/S". We also compute an averaged version of the algorithm in the uniform case, referred to as "avg".
  
We perform 100 repetitions of the algorithm for different time horizons 
\[
n \in (2^p) \times \{500, 1000, 1500, 2500, 3500, 5000\}.
\]
We consider the discrete function $f_{disc,2}$ defined by
\[
f_{disc,2}(X^1, X^2, X^3) = X^1 \mathbf{1}_{\{X^3 > 0\}} + X^2 \mathbf{1}_{\{X^3 < 0\}} + X^3,
\]
and a step sequence $\eta_n = 6 / \sqrt{n+1}$.

The advantage of this choice lies in the fact that the true values of the Sobol' indices can be computed exactly. 
For each strategy, we compute the mean squared error of the estimated closed Sobol' indices, obtained as $M^{-1}\Sbn$.
For comparison, we also compute the $2^p$ Pick-Freeze estimators based on $\{500, 1000, 1500, 2500, 3500, 5000\}$ observations. The mean squared errors obtained in this case correspond to the line labeled "PF" in Figure \ref{fig:MSE}.
\begin{figure}[h!]
    \centering
    \includegraphics[scale=0.5]{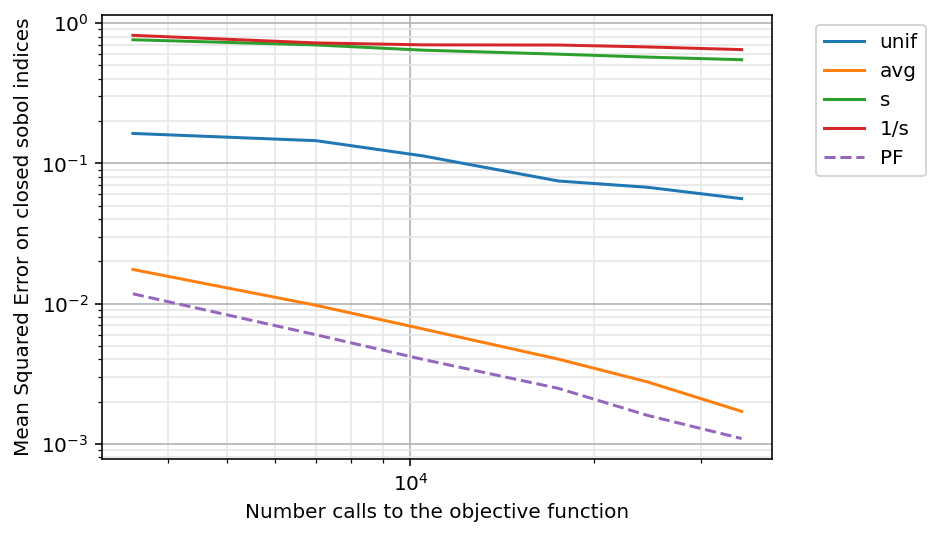}
    \caption{MSE on the $2^5$ closed Sobol' indices computed over 100 repetitions of the Pick-freeze mirror algorithm for different choices of the sampling distribution $a$.}
    \label{fig:MSE}
\end{figure}

Interestingly, we observe that the uniform strategy performs better than the adaptive strategies in this case. Furthermore, when considering an averaged version of the algorithm, we recover a convergence rate of $1/n$, similar to that of the Pick-Freeze method.
Finally, this last method, using a uniform weight for $a$ combined with Cesàro averaging, appears to achieve the best results among all our strategies.

These simulations illustrate that the choice of $a$ has a direct impact on the quality of our numerical results.
Similarly, the choice of $a$ also likely affects the performance of the averaged algorithm.
\section{Conclusions and perspectives}

{In this section, we gather comments on the results we have obtained, in particular their limitations and how they could be further extended or complemented.}

\paragraph{Discussion on the theoretical results (Theorems \ref{theo:cv_ps} and \ref{theo:sobol_non_asympt})}

{We note that the result stated in Theorem \ref{theo:cv_ps} is purely asymptotic, as it is expressed in terms of a limit in $n$, and it is not quantitative, in the sense that the “rate” of convergence of the sequence $(\Sbn)_{n \ge 1}$ toward the target $\Sb$ remains unknown, as our proof method does not allow such a rate to be identified.
In particular, it would be highly valuable to complement our result with an almost sure convergence rate (see \cite{AS-rate,pmlr-v134-sebbouh21a,JMLR:v25:23-1436}), which constitutes a stronger statement but would certainly require a more refined martingale decomposition.}

{Theorem \ref{theo:sobol_non_asympt} thus constitutes a first attempt to make our study of the proposed algorithm more quantitative.
We emphasize that item $i)$ of Theorem \ref{theo:sobol_non_asympt} is not  a true convergence result, which is indeed impossible to obtain for a constant step-size stochastic algorithm. Nevertheless, it can be regarded as a benchmark result, following common practice in convex optimization for machine learning. It provides a convenient way to assess the mean-square behavior of a stochastic optimization algorithm in a convex landscape.
In this context, obtaining an upper bound on the excess risk of order $1/\sqrt{n}$ is not surprising, as it corresponds to the minimax rate of convergence in many stochastic optimization problems with convex landscapes (see, e.g., \cite{NY83}).
Item $ii)$ of Theorem \ref{theo:sobol_non_asympt} concerns the case of a decreasing step sequence. The result holds for any finite simulation horizon but leads to a slightly weaker upper bound with an additional logarithmic factor in the bound of point $ii)$.}

{\paragraph{Towards a central limit theorem}
Even if the bound does not appear to explicitly involve the ambient dimension of the problem, it is implicitly present both in the term $\|a\|_{\exp,2}$ and in the fourth-order moment of the random variable $Y$.
This result preliminary captures the effect of the dimension $d$ of the inputs, and of the sample size $n$. Perhaps more concisely, it would be valuable to establish a central limit theorem for the properly rescaled algorithm $(\Sbn - \Sb)_{n \ge 1}$, which would provide a precise characterization of the limiting variance. 
This would allow us to obtain the exact rate of convergence, as well as to quantify the effect of the dimension through the limiting variance–covariance matrix.
See, \textit{e.g.}, \cite{duflo2013random,Gadat2016StochasticHB,10.1214/21-EJS1880} for examples of CLTs derived in various contexts using martingale representations or the Markov process perspective. Such extensions could be the subject of future research.}

{\paragraph{Choice of $a$}}

{Beyond understanding the dependence on $n$ and $d$, it would also be highly valuable to highlight the role of $a$, that is, the sampling distribution itself, in the results obtained by our method
as illustrated in Section \ref{sec:sampling_strat}. As a first step, by focusing primarily on the constant $\|a\|_{\exp,2}$. For a fixed number of coefficients, the quantity $\|a\|_{\exp,2}$ decreases as the distribution $a$ increasingly favors subsets of smaller size. Nevertheless, one should not conclude that it is advisable to choose a distribution $a$ such that $\|a\|_{\exp,2}$ is very small, since the result concerns the value of $\Phi^a$, not of $\bar{\Sbf}^{\eta}_{n}$ itself. Indeed, while it is possible to relate the proximity of $\Phi^a(\bar{\Sbf}^{\eta}_{n})$ to its minimum with the proximity of $\bar{\Sbf}^{\eta}_{n}$ to $\Sb$, this relation critically depends on the strong convexity constant of $\Phi^a$ (associated with $\nabla^2 \Phi^a$), which is significantly degraded if $a$ is chosen to produce a very small $\|a\|_{\exp,2}$.
 Hence, the influence of $a$ through $\|a\|_{\exp,2}$ and the spectral properties of $\nabla^2 \Phi^a$ warrants a deeper mathematical investigation, which lies beyond the scope of the present paper and will be addressed in future work.}
  
 {As a second step, one could also investigate the choice of $a$ by examining the limiting variance obtained through a potential central limit theorem. In particular, the study could focus on selecting $a$ by minimizing a functional involving the trace of the limiting covariance matrix (or any other criterion based on the resulting limiting distribution).}

{\paragraph{Numerical improvement}
Finally, one may adopt a more algorithmic perspective to extend our estimation strategy. It is relatively clear that any algorithm will suffer from the curse of dimensionality, since when $X$ contains $p$ variables, there are $2^p$ Sobol indices to estimate. Consequently, any method will exhibit an exponential dependence on $p$ for generic functions, in the absence of additional assumptions on the function $f$.
Faced with the combinatorial explosion of subsets, it is common in computer science to resort to greedy strategies of the divide-and-conquer type.
In order to design an efficient iterative method, one could consider adopting a hierarchical sequential algorithm that groups variables together through Sobol indices adapted to such groups. This constitutes a research topic in its own right, which deserves dedicated and sophisticated developments that we will address in future work.}

\appendix

\section{Proof of the theoretical results}
\label{sec:proofs}
This technical section is devoted to the proofs Theorem \ref{theo:cv_ps} and in Theorem \ref{theo:sobol_non_asympt}.

\subsection{Gradient of $\Phi^a$}

\begin{proof}[Proof of Proposition \ref{prop:gradient_Phi_a}]
For $s\in \Delta_q$ and $h\in\R^q$ we verify that:
\begin{align*}
\Phi^a(s+h)&-\Phi^a(s)\\& = \sum_{u \in \Ups} a_u \E[(Y-\E[Y])[M^{-1}h]_u \left((Y-\E[Y])[M^{-1}s]_u-(Y^u-\E[Y]) \right) ] +o(|h|^2)
\\
&=\sum_{u \in \Ups} a_u \E\left[(Y-\E[Y])\sum_{i} M^{-1}_{u,i}h_i \left((Y-\E[Y])[M^{-1}s]_u-(Y^u-\E[Y]) \right) \right] +o(|h|^2)
\\
&=\sum_{u \in \Ups} a_u \left\langle \E\left[(Y-\E[Y]) M^{-1}_{u,:} \left((Y-\E[Y])[M^{-1}s]_u-(Y^u-\E[Y]) \right) \right], h\right\rangle +o(|h|^2) \\
& = \langle \nabla \Phi^a(s),h \rangle + o(|h|^2)
\end{align*}
where $M^{-1}_{u,:}$ refers to row $u$ of the matrix $M^{-1}$. 
Let us finally remark that if $U$ is a discrete $\Ups$-valued random variable distributed according to the probability distribution $a$, then we can rewrite $\nabla \Phi^a$ as:
\begin{align*}
\nabla \Phi^{a}(s)&=\sum_{u\in \Ups} a_u  \E\left[(Y-\E[Y]) M^{-1}_{u,:} \left((Y-\E[Y])[M^{-1}s]_u-(Y^u-\E[Y]) \right) \right]    \\
&=\E\left[(Y-\E[Y]) M^{-1}_{U,:} \left((Y-\E[Y])[M^{-1}s]_U-(Y^U-\E[Y]) \right) \right].
\end{align*}
\end{proof}

\subsection{Proof of Theorem \ref{theo:cv_ps} (almost sure convergence)}
The proof of almost sure convergence follows the general approach of \cite{Lan:2012}, with specific attention given to the presence of an estimation bias in the algorithm due to the online approximation of $\E[Y]$.
This bias is handled in a manner quite similar to that of \cite{CGH24}, although the control of this bias term is, of course, specific to the context considered in the present work.
Finally, although the approach based on relative smoothness introduced in \cite{bauschke2017descent} offers a powerful framework for analyzing mirror descent methods (even in the stochastic cases, see \cite{Dragomir,Dragomir-Taylor}), it cannot be employed in our setting, as the conditions required to apply the results of  \cite{Dragomir,Dragomir-Taylor}
are not satisfied. Indeed, while the function $\Phi^a$ exhibits uniform Hessian bounds over the entire simplex (see in particular Appendix \ref{app:spectre-M}), the entropy function $h$ given in Equation \eqref{def:phi} defining our Bregman divergence is not differentiable on the boundary of the simplex so that the metric induced by $\Dp$ cannot be upper-bounded by the one induced by the Hessian of $\Phi^a$.

\begin{proof}[Proof of Theorem \ref{theo:cv_ps}]
 $ $\\
The proof is organized into three parts. The first part leverages the variational formulation of mirror descent to establish a one-step inequality for the Bregman divergences. The second part provides a precise, quantitative estimate of the bias between the estimated drift at iteration $n$ and the theoretical drift, which arises from replacing $\E[Y]$ with the empirical mean $\hat{m}_n$ in the gradient. In the final part, the convergence of the sequence is established using the Robbins–Siegmund lemma, together with several technical estimates.\\

\noindent \textbf{Step 1: Obtaining a recursion on\, $\Dp(\Sb,\Sbn)$} 
Recall the definition of $\Sbnp$
\[\Sbnp = \arg \min_{s \in \Delta_q } \left\{ \langle 
\Hhnp(\Sbn),s -\Sbn \rangle + \frac{1}{\eta_{n+1}} \Dp(s,\Sbn)\right\}
\,,\]
then the first order condition reads
\[
\forall s\in\Delta_q: \qquad 
\eta_{n+1} \langle\Hhnp(\Sbn) , s-\Sbnp \rangle + \langle \nabla_1 \Dp (\Sbnp,\Sbn), s-\Sbnp\rangle \ge0,
\]
where $\nabla_1 \Dp$ refers to the gradient with respect to the first variable.
As  $\nabla_1 \Dp(x,y)=\nabla h(x) -\nabla h(y)$ (Lemma \ref{lem:grad_dp}) we deduce that:
\begin{align*}
\forall s\in\Delta_q: \qquad 
\eta_{n+1} \langle\Hhnp(\Sbn), \Sbnp-s \rangle 
&\le \langle \nabla h(\Sbnp)-\nabla h(\Sbn), s-\Sbnp\rangle\\
& \le \Dp (s,\Sbn)-\Dp(s,\Sbnp)-\Dp (\Sbnp,\Sbn),
\end{align*}
where the second inequality derives from the three point lemma \ref{lem:3points}. Therefore, using simple algebra and Inequality \eqref{eq:Dp_lower}, we obtain that:
\begin{align*}
\Dp (s,\Sbnp)\le  \Dp (s,\Sbn)-\eta_{n+1} \langle\Hhnp(\Sbn), \Sbnp -s\rangle -\frac{1}{2}\| \Sbnp-\Sbn\|_2^2.
\end{align*}
We can now substitute $\Sbnp -s$ by $\Sbn -s$ in the middle term and write that:
\begin{align*}
\eta_{n+1} \langle\Hhnp(\Sbn), \Sbnp -s\rangle=\eta_{n+1} \langle\Hhnp(\Sbn), \Sbn -s\rangle +\eta_{n+1} \langle\Hhnp(\Sbn), \Sbnp -\Sbn\rangle.
\end{align*}
The Young inequality induces:
\[\eta_{n+1}|\langle\Hhnp(\Sbn), \Sbnp -\Sbn\rangle|\le \frac{1}{2} \eta_{n+1}^2 \|\Hhnp(\Sbn)\|_2^2 +\frac{1}{2} \| \Sbnp-\Sbn\|_2^2.
\]
This finally leads to the key inequality:
\begin{equation}\label{eq:rec_1}
\Dp (s,\Sbnp)\le  \Dp (s,\Sbn)-\eta_{n+1} \langle\Hhnp(\Sbn), \Sbn -s\rangle +\frac{1}{2} \eta_{n+1}^2 \|\Hhnp(\Sbn)\|_2^2.
\end{equation}

\noindent\textbf{Step 2: Comparison between   $\Hhnp(\Sbn)$ and  $ \nabla \Phi^a(\Sbn)$.}
Recall that from Proposition \ref{prop:gradient_estimator_unbiased}
\[\Phi^a(\Sbn)= \E\left[ \nabla H(\Sbn, \Ynp,\YUnp,\Unp)| \mathcal{F}_n \right],
\] 
we can then write 
\begin{equation}
    \label{eq:dec-drift}
    \Hhnp(\Sbn)=\nabla \Phi^a(\Sbn) + \Delta M_{n+1} +R_{n+1},
\end{equation}
where $\Delta M_{n+1}$ is a $(\mathcal{F}_n)$-martingale increment defined by:
\begin{equation}
\label{def:martingale}
\Delta M_{n+1}=\Hhnp(\Sbn)-\E\left[\Hhnp(\Sbn)| \mathcal{F}_n \right],
\end{equation}
and $R_{n+1}$ is a rest/bias term that comes  from the estimation of $\mathbb{E}[Y]$ with $\hat{m}_n$ in Equation \eqref{eq:gradient_estimator}:    
\begin{equation}
    \label{def:reste}
R_{n+1} = \E\left[\Hhnp(\Sbn)| \mathcal{F}_n \right] - \nabla \Phi^a(\Sbn).
\end{equation}

Using Equation \eqref{eq:dec-drift} in \eqref{eq:rec_1}, we deduce that:
\begin{align*}
\Dp (s,\Sbnp)&\le  \Dp (s,\Sbn)-\eta_{n+1} \langle\nabla \Phi^a(\Sbn) , \Sbn -s\rangle-\eta_{n+1} \langle\Delta M_{n+1} , \Sbn -s\rangle \\
&\qquad -\eta_{n+1} \langle R_{n+1}, \Sbn -s\rangle +\frac{1}{2} \eta_{n+1}^2 \|\Hhnp(\Sbn)\|_2^2.
\end{align*}
Let us choose $s=\Sb$ in the previous inequality to obtain 
\begin{align}
\Dp (\Sb,\Sbnp)&\le  \Dp (\Sb,\Sbn)-\eta_{n+1} \langle\nabla \Phi^a(\Sbn) , \Sbn -\Sb\rangle-\eta_{n+1} \langle\Delta M_{n+1} , \Sbn-\Sb\rangle\nonumber\\
&\qquad-\eta_{n+1} \langle R_{n+1}, \Sbn -\Sb\rangle +\frac{1}{2} \eta_{n+1}^2 \|\Hhnp(\Sbn)\|_2^2. \label{eq:ineg_1}
\end{align}
Now from the Cauchy-Schwarz inequality and then the Young inequality, we have
\begin{align}
-\eta_{n+1} \langle R_{n+1}, \Sbn -\Sb\rangle \le 
& \  \eta_{n+1}\|R_{n+1}\|_2 \|\Sbn -\Sb\|_2 \nonumber\\
& \le  \eta_{n+1}\|R_{n+1}\|_2 \frac{1+\|\Sbn -\Sb\|_2^2}{2} \,. \label{eq:upper_bound_reste}    
\end{align}
We now use Inequality \eqref{eq:upper_bound_reste} in \eqref{eq:ineg_1} together with the strong convexity of $h$ in inequality \eqref{eq:Dp_lower} to obtain:
\begin{align*}
\Dp (\Sb,\Sbnp)&\le  \Dp (\Sb,\Sbn)(1+{\frac{1}{2}}\eta_{n+1}\| R_{n+1}\|_2)
 -\eta_{n+1} \langle\nabla \Phi^a(\Sbn) , \Sbn -\Sb\rangle \\
 &-\eta_{n+1} \langle\Delta M_{n+1} , \Sbn-\Sb\rangle +\frac{1}{2}\eta_{n+1}\| R_{n+1}\|_2+\frac{1}{2} \eta_{n+1}^2 \| \Hhnp(\Sbn)\|_2^2.
\end{align*}
We observe that $R_{n+1}$ is $\mathcal{F}_n$-measurable and computing the conditional expectation with respect to $\mathcal{F}_n$ leads to: 
\begin{align}
\E[\Dp (\Sb,\Sbnp)|\mathcal{F}_n] &\le  \Dp (\Sb,\Sbn) (1+ {\frac{1}{2}}\eta_{n+1} \|R_{n+1}\|_2)
-\eta_{n+1} \langle\nabla \Phi^a(\Sbn) , \Sbn -\Sb\rangle
\nonumber
\\
&\qquad+ \frac{1}{2}\eta_{n+1}\| R_{n+1}\|_2+\frac{1}{2} \eta_{n+1}^2 \E[\| \Hhnp(\Sbn)\|_2^2|\mathcal{F}_n].
\label{ineq:clef}
\end{align}
\noindent\textbf{Step 3: Conclusion of the proof.}
Using Lemma \ref{lem:R_n} and our set of assumptions on the step-size sequence \eqref{hyp:pas}, we deduce that:
\[\sum_{n \ge 0} \eta_{n+1} \E\left[\|R_n\|_2\right]<\infty.\]
Notice that since all the terms are non-negative, 
\[\E\left[\sum_{n \ge 0}  \eta_{n+1} \|R_n\|_2\right]\le \sum_{n \ge 0}\eta_{n+1} \E[\|R_n\|_2],\] and it shows that $\sum_{n \ge 0} \eta_{n+1} \|R_n\|_2$ is almost surely finite and $\prod_{n \ge 0} (1+ \eta_{n+1} \|R_n\|_2)<\infty$ almost surely as well.
In the meantime, Lemma \ref{lem:norm} associated with conditions \eqref{hyp:pas} also shows that:
\[ \sum_{n \ge 0}\E[\eta_{n+1}^2 \| \Hhnp(\Sbn)\|_2^2 ] <\infty.
\]
We can now apply the Robbins-Siegmund Lemma (stated in Lemma \ref{lem:Robins-Siegmund}) and deduce that a random variable $D_\infty$ exists such that: 
\begin{equation} \label{eq:conv-D-infty}
\Dp (\Sb,\Sbnp) \longrightarrow D_\infty  \in L^1 \quad \text{and}\quad \sum_{n \ge 0} \eta_{n+1} \langle\nabla \Phi^a(\Sbn) , \Sbn -\Sb\rangle< \infty,\quad  \text{a.s.}     
\end{equation}
Using the convexity of $\Phi^a$ we obtain
\begin{equation}\label{eq:conv-series} \sum_{n \ge 0} \eta_{n+1} \left(\Phi^a(\Sbn)\right) \le  \sum_{n \ge 0} \eta_{n+1} \langle\nabla \Phi^a(\Sbn) , \Sbn -\Sb\rangle <+\infty, \,\quad \text{a.s.}  
\end{equation}
The rest of the proof proceeds now in a standard way in stochastic approximation theory. From Equation \eqref{eq:conv-series} and $\sum_{n \ge 0} \eta_{n+1} = + \infty$, we know that a subsequence $(\hat{\mathbf{S}}_{n_k})_{k \ge 1}$ exists such that:
$$
\lim_{k \rightarrow + \infty} \langle\nabla \Phi^a(\hat{\mathbf{S}}_{n_k}) , \hat{\mathbf{S}}_{n_k} -\Sb\rangle =0, \quad  \text{a.s.}  
$$
The strong convexity of $\Phi^a$ may be translated into the following inequality:
$$
\forall s \in \Delta_q \qquad 
\langle\nabla \Phi^a(s) , s -\Sb\rangle \ge \rho_a \|s-\Sb\|^2,
$$
where $\rho_a>0$ refers to the lowest eigenvalue of $\nabla^2 \Phi^a$ over $\Delta_q$ (see Proposition \ref{prop:hessienne} in the appendix). 

It then implies that 
$$
\lim_{k \rightarrow + \infty} \hat{\mathbf{S}}_{n_k} = \Sb, \quad 
\text{a.s.}  
$$
In the meantime, we also deduce from Equation \eqref{eq:conv-D-infty} that $\Dp (\Sb,\hat{\mathbf{S}}_{n_k}) \longrightarrow D_\infty$ a.s. and the continuity of the Bregman divergence $\Dp$ yields $D_\infty=0$ a.s.
Finally, the lower bound \eqref{eq:Dp_lower} implies that the entire sequence $\Sbn$ converges towards $\Sb$.

\end{proof}

\subsection{Proof of Theorem \ref{theo:sobol_non_asympt} ( non-asymptotic upper bound)}

The following proof follows an approach recently proposed in \cite{CGH24} to obtain a non-asymptotic bound for biased stochastic mirror methods. Nevertheless, care must be taken to meticulously track the sequence of inequalities in order to preserve the dimension-dependent constants that have a significant influence on the final bound. Moreover, although the proof strategy is similar to that of \cite{CGH24}, it still requires bias controls that are specific to the model considered here, as well as precise inequalities on the spectra of the change-of-variable matrices $M$ used.

\begin{proof}[Proof of Theorem \ref{theo:sobol_non_asympt}]
The starting point of the non-asymptotic bound is Equation \eqref{ineq:clef} that can be written as
\begin{align*}
    \E [\Dp(\Sb,\Sbnp)|\mathcal{F}_n]& \leq \Dp(\Sb,\Sbnp) - \eta_{n+1} \langle \nabla \Phi^a(\Sbn),\Sbn-\Sb\rangle \\
    &\quad + \frac{\eta_{n+1} \|R_{n+1}\|_2}{2} (1+\|\Sbn-\Sb\|_{2}^2) + \frac{1}{2} \eta_{n+1}^2 \E[\| \Hhnp(\Sbn)\|_2^2|\mathcal{F}_n]
    \\
    & \leq \Dp(\Sb,\Sbnp) - \eta_{n+1}  (\Phi^a(\Sbn)-\Phi^a(\Sb))\\
    &\quad +\frac{5}{2}\eta_{n+1} \|R_{n+1}\|_2 + \frac{1}{2} \eta_{n+1}^2 \E[\| \Hhnp(\Sbn)\|_2^2|\mathcal{F}_n],
\end{align*}
where we used in the last line the convexity of $\Phi^a$ that yields $-\langle \nabla \Phi^a(\Sbn),\Sbn-\Sb \rangle \leq -(\Phi^a(\Sbn)-\Phi^a(\Sb))$ and the compactness of $\Delta_q$ that yields $\|\Sbn-\Sb\|_2^2 \leq \|\Sbn-\Sb\|_1^{2} \leq 4$. 
We now compute the overall expectation, use Lemma \ref{lem:R_n} and Lemma \ref{lem:norm}, and obtain that:
\begin{align*}
\eta_{n+1} \E[\Phi^a(\Sbn)-\Phi^a(\Sb)] &\leq  \E [ \Dp(\Sb,\Sbn) - \Dp(\Sb,\Sbnp)]\\
&\quad+5 \eta_{n+1} \E [ \|R_{n+1}\|_2 ] + 
\frac{\eta_{n+1}^2}{2} \E[\| \Hhnp(\Sbn)\|_2^2]\\
&\leq  \E [ \Dp(\Sb,\Sbn) - \Dp(\Sb,\Sbnp)]\\
& \quad +5  \|a\|_{\exp} \Var(Y) \frac{\eta_{n+1} }{n} + C \left(1+\sqrt{\|a\|_{\exp,2}} \right) \E[Y^4]\frac{\eta_{n+1}^2}{2}.
\end{align*}
The rest of the proof then proceeds following a standard argument with Cesaro mean and telescopic sums:
\begin{align*}
\sum_{k=1}^n \eta_{k+1} (\E[\Phi^a(\Sbk)] - \Phi^a(\Sb)) 
&\leq \Dp(\Sb,\Sbun) + 5 \|a\|_{\exp} \Var(Y) \sum_{k=1}^n \frac{\eta_{k+1} }{k}\\
&\quad+  C \left(1+\sqrt{\|a\|_{\exp,2}} \right) \E[Y^4] \sum_{k=1}^n \frac{\eta_{k+1}^2}{2}.
\end{align*}
Defining now the weighted Cesaro average as:
$$
\bar{\Sbf}^{\eta}_{n} = \frac{\sum_{k=1}^n \eta_{k+1} \Sbk}{\sum_{k=1}^n \eta_{k+1}},
$$
then, the convexity of $\Phi^a$ yields:
\begin{align}
    \E[\Phi^a(\bar{\Sbf}^{\eta}_{n})]&-\min \Phi^a  
    \leq \frac{
\sum_{k=1}^n \eta_{k+1} (\E[\Phi^a(\Sbk)] - \Phi^a(\Sb))}{\sum_{k=1}^n \eta_{k+1}} \nonumber\\
& \leq \frac{ \Dp(\Sb,\Sbun) + 5\|a\|_{\exp} \Var(Y)
\sum_{k=1}^n  \eta_{k+1} k^{-1} + \frac{ C}{2} \left(1+\sqrt{\|a\|_{\exp,2}} \right) \E[Y^4]}{  \sum_{k=1}^n \eta_{k+1}}.
  \label{eq:borne-S_eta_n}
\end{align}
It remains to optimize both terms of the right hand side of  Inequality \eqref{eq:borne-S_eta_n} by choosing an appropriate step-size sequence $(\eta_{k+1})_{k\ge0}$. There exists two classical choices.
\begin{itemize}
\item Either $(\eta_{k+1})_{k\ge0}$ is piecewise constant (depending on the final number of performed simulations):
\[ \eta_{k+1}=\begin{cases}& \eta^*\quad \forall k  \le n-1\\
&0 \quad \forall k>n-1.\end{cases}
\] 
In that case, Inequality \eqref{eq:borne-S_eta_n} reads:
\begin{align*} \E[\Phi^a(\bar{\Sbf}^{\eta}_{n})]-\min \Phi^a 
&\le \frac{\Dp(\Sb,\Sbun)}{n\eta^*}+  \frac{C}{2} \left(1+\sqrt{\|a\|_{\exp,2}} \right) \E[Y^4] \eta^* \\
&\qquad + 5\|a\|_{\exp} \Var(Y)
\frac{\log(n)}{n} .
\end{align*}
We finally optimze the choice of $\eta^*$, which leads to:
$$\eta^*=\sqrt{\frac{1}{ (1+\sqrt{\|a\|_{\exp,2}}) n}}.$$
Our final upper bound is then:
 \begin{align*}
 \E[\Phi^a(\bar{\Sbf}^{\eta}_{n})]-\min \Phi^a &\leq \sqrt{ 1+\sqrt{\|a\|_{\exp,2}}} \left( \Dp(\Sb,\Sbun) 
 + \frac{C}{2} \E[Y^4] \right) n^{-1/2}\\&\qquad + 5\|a\|_{\exp} \Var(Y) \frac{\log(n)}{n}.
 \end{align*}
\item Or $(\eta_{k+1})_{k\ge0}$ is a decreasing step-size sequence of the form $\eta_{k+1}=\eta_0 (k+1)^{-\alpha}$ with $\alpha\in[1/2,1)$.
In that case the right hand side of Inequality \eqref{eq:borne-S_eta_n} decays as $(\sum_{k=1}^n  \eta_{k+1})^{-1} \le \eta_0 (n+1)^{-\alpha+1}$.
The situation becomes somewhat more technical and tedious to describe precisely, and only the dependence on the number of iterations remains straightforward to characterize. In particular, one can verify that with $\alpha=1/2$ and $\eta_0 \asymp 1$, a large enough $C$ exsists such that:
$$
\E[\Phi^a(\bar{\Sbf}^{\eta}_{n})]-\min \Phi^a
\leq C \left( \Dp(\Sb,\Sbun)  + \|a\|_{\exp,2} +  \E[Y^4] \right) \frac{\log n}{\sqrt{n}},
$$
which constitutes a slightly weaker bound than the inequality obtained in the previous case.
\end{itemize}

 \end{proof}
 \subsection{Controls on the bias terms}

\begin{lemma}\textbf{Control of $\E[\|R_{n+1}\|_2]$}\label{lem:R_n}
For any $n \ge 1$, one has:  \[ \E[\|R_{n+1}\|_2]\le  \frac{ \|a\|_{\exp,1/2}  \Var[Y] }{n}.\] 

\end{lemma}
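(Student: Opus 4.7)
The plan is to decompose the bias $R_{n+1}$ as a polynomial in the centering error $\epsilon := \mu - \mn$, where $\mu = \E[Y]$. Writing $(\Ynp - \mn) = (\Ynp - \mu) + \epsilon$ and $(\YUnp - \mn) = (\YUnp - \mu) + \epsilon$ and substituting into Equation \eqref{eq:gradient_estimator}, the estimator $\Hhnp(\Sbn)$ splits into four terms of respective degrees $\epsilon^0$, $\epsilon^1$, $\epsilon^1$ and $\epsilon^2$ in the scalar factor $\epsilon$. Since $\epsilon$ and $\Sbn$ are $\mathcal{F}_n$-measurable while $(\Ynp,\YUnp,\Unp)$ is independent of $\mathcal{F}_n$, the conditional expectation $\E[\cdot \mid \mathcal{F}_n]$ can be computed term by term, pulling $\epsilon$ out as a scalar constant.

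The key mechanism is the cancellation of the three lower-order terms. By Proposition \ref{prop:gradient_Phi_a}, the degree-zero term is exactly $\nabla \Phi^a(\Sbn)$ and therefore cancels the subtracted true gradient. Each of the two degree-one terms is a product of a $\sigma(\Unp,\mathcal{F}_n)$-measurable quantity with a factor of the form $(\Ynp - \mu)$ or $(\Ynp-\mu)[M^{-1}\Sbn]_{\Unp} - (\YUnp - \mu)$. Conditioning first on $\Unp$, both such factors have zero conditional mean: $\E[\Ynp - \mu \mid \Unp] = 0$ by independence of $\Ynp$ and $\Unp$, and $\E[\YUnp - \mu \mid \Unp] = 0$ because, given $\Unp$, the Pick-Freeze construction still yields a random vector distributed as $\QQ$ thanks to the coordinate-wise independence assumption. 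Both degree-one terms therefore vanish.

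Only the degree-two term survives, yielding
\[
R_{n+1} = \epsilon^2 \sum_{u \in \Ups} a_u \, M^{-1}_{u,:} \left([M^{-1}\Sbn]_u - 1\right).
\]
To bound this remainder, I exploit the explicit structure $M^{-1}_{u,v} = \mathbf{1}_{v \subset u}$ recalled in \eqref{def:M_inv}: the row $M^{-1}_{u,:}$ is a $\{0,1\}$-vector with exactly $2^{|u|}$ non-zero entries, so $\|M^{-1}_{u,:}\|_2 = 2^{|u|/2}$. Moreover, since $\Sbn \in \Delta_q$ has non-negative coordinates summing to $1$, $[M^{-1}\Sbn]_u = \sum_{v \subset u}(\Sbn)_v \in [0,1]$, whence $|[M^{-1}\Sbn]_u - 1| \le 1$. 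The triangle inequality then gives $\|R_{n+1}\|_2 \le \epsilon^2 \sum_{u} a_u\, 2^{|u|/2} = \epsilon^2\, \|a\|_{\exp,1/2}$. Taking the overall expectation and using $\E[\epsilon^2] = \Var(\mn) = \Var(Y)/n$ (standard variance of the empirical mean of $n$ i.i.d.\ samples) delivers the stated bound. The main subtlety of the plan is correctly identifying the three order-one cancellations; once that algebra is carried out, the remainder is controlled in an essentially deterministic way, and this is what makes the bias decay at rate $1/n$ rather than the slower $1/\sqrt{n}$.
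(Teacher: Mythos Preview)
Your proposal is correct and follows essentially the same route as the paper: both expand the biased gradient in powers of $\epsilon = \E[Y]-\mn$, identify the degree-zero term as $\nabla\Phi^a(\Sbn)$, show the degree-one contributions vanish because $\E[\Ynp-\mu\mid\Unp]=\E[\YUnp-\mu\mid\Unp]=0$, and then bound the surviving degree-two term via $|[\Mbm\Sbn]_u-1|\le 1$, $\|\Mbm_{u,:}\|_2=2^{|u|/2}$ and $\E[\epsilon^2]=\Var(Y)/n$. The only cosmetic difference is that the paper packages the linear-in-$\epsilon$ terms together using the identity $(\alpha+\epsilon)((\alpha+\epsilon)C-(\beta+\epsilon))=\alpha(\alpha C-\beta)+\epsilon(2\alpha C-\alpha-\beta)+\epsilon^2(C-1)$, whereas you split them into two pieces; the cancellation argument and final bound are identical.
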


\begin{proof}[Proof of Lemma \ref{lem:R_n}]
We recall that the bias term involved in $R_{n+1}$ is given by:
\[R_{n+1} = \E\left[ \Hhnp(\Sbn)| \mathcal{F}_n \right] - \nabla \Phi^a(\Sbncorr).
\]
According to definition \eqref{def:reste}, we are led to compute:
\begin{align*}
    \E&\left[ \Hhnp(\Sbn)|\mathcal{F}_n\right]\\
    &=\E\left[ (\Ynp-\mn) \left( (\Ynp-\mn)[\Mbm s]_{\Unp}-(\YUnp-\mn) \right)
[\Mbm]_{\Unp,:} ^T |\mathcal{F}_n\right].
\end{align*}
Let us use the decompositions: 
$$\Ynp-\mn= (\Ynp-\E[Y])+(\E[Y]-\mn) $$ and $$ \YUnp-\mn = (\YUnp-\E[Y]) + (\E[Y]-\mn)
$$
and the simple algebra:
\[ (\alpha+\epsilon)\Big((\alpha+\epsilon)C-(\beta+\epsilon) \Big) = \alpha\Big(\alpha C-\beta-\alpha\Big) +\epsilon (2\alpha C-\alpha-\beta ) +\epsilon^2(C-1),\]
we deduce that
\begin{align*}
     &\Hhnp(s) =\nabla H(s,\Ynp,\YUnp,\Unp)\\
     & + (\E[Y]-\mn)\left( 2(\Yn-\E[Y])[\Mbm s]_{\Unp} -(\Yn-\E[Y]) - (\YUnp -\E[Y]) \right)[\Mbm]_{\Unp,:} ^T\\
&+ (\E[Y]-\mn)^2 \left( [\Mbm s]_{\Unp}-1\right)[\Mbm]_{\Unp,:} ^T .
\end{align*}
We then compute the conditional expectation with respect to $\mathcal{F}_n$ of the previous expression and combine our last decomposition with Equation \eqref{eq:gradient_estimator} to get:
\begin{align}
R_{n+1}=&(\E[Y]-\mn)\E\left[\left\{ 2(\Yn-\E[Y])[\Mbm \Sbncorr]_{\Unp}-(\Yn-\YUnp)  \right\}  [\Mbm ]_{\Unp,:} ^T|\mathcal{F}_n\right] \nonumber\\
&+(\E[Y]-\mn)^2\E[([\Mbm \Sbncorr]_{\Unp}-1)[\Mbm]_{\Unp,:} ^T|\mathcal{F}_{k}].
\label{eq:Rk_expression}
\end{align}
We first consider the conditional expectation:
\begin{align*}
   & \E\left[\left\{ 2(\Yn-\E[Y])[\Mbm \Sbn]_{\Unp}-(\Yn-\YUnp)  \right\}  [\Mbm ]_{\Unp,:} ^T|\mathcal{F}_n\right] \\
    &\quad=    \E\left[ 2(\Yn-\E[Y])[\Mbm \Sbn]_{\Unp} [\Mbm ]_{\Unp,:} ^T|\mathcal{F}_n\right] \\
    &\qquad-\E\left[(\Yn-\YUnp)   [\Mbm]_{\Unp,:} ^T|\mathcal{F}_n\right].
\end{align*}
For the first term remark that $\Yn$ is independent of $\Unp$ and of $\mathcal{F}_n$, therefore
\begin{align*}
 &    \E\left[ 2(\Yn-\E[Y])[M^{-1}\Sbncorr]_{\Unp} [\Mbm ]_{\Unp,:} ^T|\mathcal{F}_n\right]\\
&=    2 E[ \Yn-\E[Y]] \E\left[[M^{-1}\Sbncorr]_{\Unp} [\Mbm ]_{\Unp,:} ^T |\mathcal{F}_n\right]
= 0.
\end{align*}
Similarly, we can rewrite the second term as 
\begin{align*}
   & E\left[(\Yn-\YUnp)   [\Mbm ]_{\Unp,:} ^T|\mathcal{F}_n\right] \\
& = E\left[((\Yn-\E[Y]) + (E[Y]- \YUnp)  ) [\Mbm ]_{\Unp,:} ^T|\mathcal{F}_n\right] \\
 &= -E\left[(\YUnp-\E[Y])   [\Mbm ]_{\Unp,:} ^T|\mathcal{F}_n\right] \\
 &= -E\left[(Y^{(U)}-\E[Y])   [\Mbm ]_{U,:} ^T\right].
\end{align*}
Let us consider a coordinate of the above column vector
\begin{align*}
\E[ (Y^{(U)}-\E[Y]) \Mbm _{U,j}] = \E \left[ \E[  (Y^{(U)}-\E[Y])  |U] \Mbm _{U,j}\right] =0.
\end{align*}
As a consequence, the bias term simply reduces to the following $q$-dimensional vector:
\begin{align}
R_{n+1}=(\E[Y]-\mn)^2\E[([\Mbm \Sbncorr]_{\Unp}-1)[\Mbm ]_{\Unp,:} ^T|\mathcal{F}_{n}].
\label{eq:Rk_expression_2}
\end{align}
We are led  to compute for $s\in\Delta_q$, and $U$ the discrete random variable distributed according to $a$ the following vector:
$\E[([\Mbm s]_{U}-1)[\Mbm]_{U,:} ^T].$
At this stage, we use the value of $\Mbm$ given in Equation \eqref{def:M_inv}, the triangle inequality and get:
$$
\|R_{n+1}\|_2 \leq (\E[Y]-\mn)^2 \E\left[ |[\Mbm \Sbncorr]_{\Unp}-1|  \left\| \Mbm _{\Unp,:} \right\|_2 |\mathcal{F}_{n}\right].
$$
Since for any $s \in \Delta_q$, we have $[\Mbm s]_U = \sum_{v \subset u} s_v \in [0,1]$ and that 
\[\|\Mbm_U\|_2^2 = \sum_{V \in \Ups} \mathbf{1}_{V \subset U} = 2^{|U|},\]
we then use our definition of $ \|a\|_{\exp,1/2}$ given in Equation \eqref{eq:norme-a-power} and get:
$$
\|R_{n+1}\|_2 \leq  \|a\|_{\exp,1/2} (\E[Y]-\mn)^2.
$$
We finally obtain the desired result while considering the global expectation:
\[\E[\|R_n\|_2]\le \|a\|_{\exp,1/2}  \E[(\E[Y]-\mn )^2] \le \|a\|_{\exp,1/2}  \frac{\Var[Y]}{n}.\]

\end{proof}

\begin{lemma}\textbf{Control of $\E[\|\Hhnp(\Sbn)\|^2]$}
\label{lem:norm}
A constant $C$ independent of $p$ exists such that:
\[\E\left[\|\Hhnp(\Sbn)\|^2\right] \le C \left(1+\sqrt{\|a\|_{\exp,2}}\right) \E[Y^4]. \]
\end{lemma}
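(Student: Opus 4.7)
The plan is to expand $\|\Hhnp(\Sbn)\|^2$ using the explicit expression of $\Hhnp$, then to exploit the structure of $\Mbm$—specifically the identities $\|\Mbm_{U,:}\|^2 = 2^{|U|}$ and $[\Mbm s]_U \in [0,1]$ for every $s\in\Delta_q$—and finally to take expectations by carefully separating the combinatorial weight $2^{|\Unp|}$ from the centred residuals of $Y$.

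I would first start from definition \eqref{eq:gradient_estimator} and write
\begin{equation*}
    \|\Hhnp(\Sbn)\|^2 = (\Ynp - \mn)^2 \, \|\Mbm_{\Unp,:}\|^2 \, \Bigl( (\Ynp-\mn)[\Mbm \Sbn]_{\Unp} - (\YUnp - \mn)\Bigr)^2.
\end{equation*}
Since $\Mbm_{u,v}=\mathbf{1}_{v\subset u}$, I have $\|\Mbm_{u,:}\|^2 = 2^{|u|}$ and $[\Mbm \Sbn]_u = \sum_{v\subset u}(\Sbn)_v \in [0,1]$ (because $\Sbn\in\Delta_q$). Combining this with $(a-b)^2\le 2(a^2+b^2)$ and an AM-GM step on the cross term $(\Ynp-\mn)^2(\YUnp-\mn)^2$ then leads to
\begin{equation*}
    \|\Hhnp(\Sbn)\|^2 \le C\, 2^{|\Unp|} \Bigl[(\Ynp-\mn)^4 + (\YUnp-\mn)^4 \Bigr]
\end{equation*}
for some universal constant $C$.

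Next I would take the expectation of each summand. The key observation is that $\Unp$ is independent of $(\Xnp,\Xpnp,\mn)$, so the first summand factorises directly as $\E[2^{|\Unp|}(\Ynp-\mn)^4] = \|a\|_{\exp,1}\,\E[(\Ynp-\mn)^4]$. For the second, $\YUnp$ does depend on $\Unp$, but conditioning on $\{\Unp=u\}$ and using that $Y^u_{n+1}=f(X^{(u)}_{n+1})$ has the same marginal law as $Y$ while being independent of $\mn$ (which is built from $Y_1,\dots,Y_n$), I obtain $\E[(\YUnp-\mn)^4\mid \Unp=u]=\E[(Y_{n+1}-\mn)^4]$, a quantity independent of $u$. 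Integrating against $(a_u)_{u\in\Ups}$ therefore reproduces the same prefactor $\|a\|_{\exp,1}$.

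It then only remains to bound $\E[(Y_{n+1}-\mn)^4]\le C'\E[Y^4]$, via $(a-b)^4\le 8(a^4+b^4)$ combined with Jensen's inequality $\E[\mn^4]\le\E[Y^4]$, and to pass from $\|a\|_{\exp,1}$ to $\sqrt{\|a\|_{\exp,2}}$ through Cauchy-Schwarz, $\E[2^{|U|}]\le \sqrt{\E[4^{|U|}]}$. Combining these estimates yields the announced inequality. The main delicate point is the statistical dependence between $\YUnp$ and $\Unp$; the identity in distribution $Y^u\stackrel{d}{=}Y$, valid for every fixed $u$ thanks to the independence of the components of $X$, is what allows the second term to admit the same clean factorisation as the first.
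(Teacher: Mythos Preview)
Your argument is correct. Both proofs start from the same expansion and both use the identities $\|\Mbm_{u,:}\|^2=2^{|u|}$ and $[\Mbm s]_u\in[0,1]$. The difference lies in how the dependence between $\YUnp$ and $\Unp$ is handled. The paper keeps the cross term $(\Ynp-\mn)^2(\YUnp-\mn)^2$ and applies Cauchy--Schwarz at the level of the expectation, $\E\bigl[(\Ynp-\mn)^2(\YUnp-\mn)^2\,2^{|\Unp|}\bigr]\le \sqrt{\E[(\Ynp-\mn)^4\,4^{|\Unp|}]}\sqrt{\E[(\YUnp-\mn)^4]}$, which produces $\sqrt{\|a\|_{\exp,2}}$ directly. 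You instead first apply AM--GM to reduce to fourth powers and then observe that, conditionally on $\{\Unp=u\}$, the pair $(Y^u_{n+1},\mn)$ has the same law as $(Y_{n+1},\mn)$, so the conditional fourth moment does not depend on $u$ and the weight $2^{|\Unp|}$ factors out cleanly as $\|a\|_{\exp,1}$. Your route is more elementary and in fact yields the sharper constant $\|a\|_{\exp,1}\le\sqrt{\|a\|_{\exp,2}}$ before you deliberately loosen it to match the stated form; the paper's Cauchy--Schwarz step is quicker to write but loses this intermediate refinement.
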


\begin{proof}[Proof of Lemma \ref{lem:norm}]
Let us recall that 
\[ \Hhnp(\Sbn)=(\Ynp-\mn) \left( (\Ynp-\mn)[\Mbm \Sbncorr]_{\Unp}-(\YUnp-\mn) \right)
[\Mbm]_{\Unp,:}.
\]
We can then compute the square norm and obtain:
\begin{align*}
&\|\Hhnp(\Sbn)\|^2 = (\Ynp-\mn)^2 \left( (\Ynp-\mn)[\Mbm \Sbncorr]_{\Unp}-(\YUnp-\mn) \right)^2
\|[\Mbm]_{\Unp,:}\|^2\\
&\le \left\{2 (\Ynp-\mn)^4 ([\Mbm \Sbncorr]_{\Unp})^2 +2 (\Ynp-\mn)^2(\YUnp-\mn)^2\right\} \|[\Mbm]_{\Unp,:}\|^2.
\end{align*}
We apply the Cauchy-Schwarz inequality and obtain:
\begin{align*}
    \E\left[\|\Hhnp(\Sbn)\|^2 \right] & \leq 
    2 \E \left[ (\Ynp-\mn)^4 ([\Mbm \Sbncorr]_{\Unp})^2\right]\\
    & \quad + 2
    \sqrt{ \E \left[ (\Ynp-\mn)^4 \|[\Mbm]_{\Unp,:}\|^4 \right]}
    \sqrt{\E \left[(\YUnp-\mn)^4 \right]}\\
    & \leq  2 \E \left[ (\Ynp-\mn)^4 ([\Mbm \Sbncorr]_{\Unp})^2\right]\\
   & \quad + 2
    \sqrt{ \E \left[ (\Ynp-\mn)^4\right]} \sqrt{ \E \left[\|[\Mbm]_{\Unp,:}\|^4 \right]}
    \sqrt{\E \left[(\YUnp-\mn)^4 \right]},
\end{align*}
where in the last line we used the independence between $\Ynp$ and $\Unp$. From the same argument as the one used in Lemma \ref{lem:R_n}, we have for any $s\in\Delta_q$:
\begin{equation} [\Mbm s]_{U} = \sum_{v \subset U} s_v \in [0,1] \quad \text{and} \quad \|[\Mbm]_{U,:}\|_2^4 = {\left( \sum_{v \in \Ups} \mathbf{1}_{v \subset U} \right)^4}= 2^{2|U|}.
\label{eq:norme-phi}
\end{equation}
We then deduce that:
\begin{align*}
    \E\left[\|\Hhnp(\Sbn)\|^2 \right] & \leq   2 \E \left[ (\Ynp-\mn)^4 \right]+
    \E \left[ (\Ynp-\mn)^4\right] \sqrt{ \E \left[\|[\Mbm]_{\Unp,:}\|^4 \right]}\\
    & \leq 16 \left( \E[(\Ynp-\E[Y])^4] + \E[(\E[Y]-\mn)^4]\right) \left(1+\sqrt{ \E \left[\|[\Mbm]_{\Unp,:}\|^4 \right]}\right)\\
    & \leq 16 \left( \E[(\Ynp-\E[Y])^4] + C_4 \frac{\E[Y^4]}{n^2}\right) \left(1+\sqrt{\|a\|_{\exp,2}}\right),
\end{align*}
where the last bound follows from Lemma \ref{lem:Petrov}.
Combining all these inequalities proves the result.
\end{proof}

\section{Standard properties on (stochastic) optimization methods\label{app:tec}}

We list below some technical results that are   useful for our purpose in the proof of Theorems \ref{theo:cv_ps} and \ref{theo:sobol_non_asympt}.

\paragraph{Properties of Bregman divergence}
We recall some standard results that are valid for any Bregman divergence $\Dp$. We refer to \cite{nemirovsky1983wiley} for further details.
\begin{lemma}[Three points lemma]\label{lem:3points}
For any triple of points $(x,y,z)$, one has:
$$
\Dp(x,z)=\Dp(x,y)+\Dp(y,z)-\langle \nabla h(z)-\nabla h(y),x-y\rangle.
$$
\end{lemma}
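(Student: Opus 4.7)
The plan is to prove the three points lemma by a direct expansion of each Bregman divergence using its defining formula
$$\Dp(w,v) = h(w) - h(v) - \langle \nabla h(v), w - v \rangle,$$
which is an algebraic identity rather than a genuinely structural fact. No convexity, smoothness or regularity of $h$ beyond differentiability is needed.

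First I would write the three relevant Bregman divergences explicitly:
\begin{align*}
\Dp(x,z) &= h(x) - h(z) - \langle \nabla h(z), x - z \rangle, \\
\Dp(x,y) &= h(x) - h(y) - \langle \nabla h(y), x - y \rangle, \\
\Dp(y,z) &= h(y) - h(z) - \langle \nabla h(z), y - z \rangle.
\end{align*}
Then I would add the last two and simplify: the $h(y)$ terms cancel, leaving
$$\Dp(x,y) + \Dp(y,z) = h(x) - h(z) - \langle \nabla h(y), x - y \rangle - \langle \nabla h(z), y - z \rangle.$$

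Next, I would rewrite $-\langle \nabla h(z), y - z\rangle$ by inserting and cancelling $x$:
$$-\langle \nabla h(z), y - z\rangle = -\langle \nabla h(z), x - z\rangle + \langle \nabla h(z), x - y\rangle,$$
so that
$$\Dp(x,y) + \Dp(y,z) = \Dp(x,z) - \langle \nabla h(y), x - y\rangle + \langle \nabla h(z), x - y\rangle,$$
which is exactly the claimed identity once the inner-product term is moved to the right-hand side. There is no real obstacle here: the only thing to be careful about is the sign and the bookkeeping of the linear terms, so the risk is purely notational rather than conceptual.
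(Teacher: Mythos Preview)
Your proof is correct; the identity follows exactly from the algebraic expansion you wrote, and the sign bookkeeping is right. The paper itself does not give a proof of this lemma but merely records it as a standard result with a reference, so your direct expansion is precisely the usual argument and there is nothing to compare.
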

\begin{lemma}[Gradient of the Bregman divergence]\label{lem:grad_dp}
For any pair of points $(x,y)$, one has:
$$
\nabla_x \Dp(x,y) = \nabla h(x)-\nabla h(y).
$$
\end{lemma}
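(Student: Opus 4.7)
The plan is essentially to differentiate the defining expression of the Bregman divergence term by term. Recall that the paper introduces, in Section 3.1,
$$\Dp(x,y) := h(x) - h(y) - \langle \nabla h(y), x - y \rangle.$$
The variable $y$ is to be held fixed, so the dependence of $\Dp(x,y)$ on $x$ is carried entirely by two terms: the entropy $h(x)$ and the linear functional $-\langle \nabla h(y), x - y \rangle$, while $-h(y) + \langle \nabla h(y), y \rangle$ contributes nothing to $\nabla_x \Dp$.

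The first step is to observe that $\nabla_x h(x) = \nabla h(x)$ by definition. The second step is to compute the gradient with respect to $x$ of the linear map $x \mapsto -\langle \nabla h(y), x - y\rangle$; since this is affine in $x$ with constant slope $-\nabla h(y)$, its gradient is exactly $-\nabla h(y)$. Summing the two contributions produces the claimed identity
$$\nabla_x \Dp(x,y) = \nabla h(x) - \nabla h(y).$$

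There is no real obstacle here: the lemma is a direct algebraic consequence of the definition of the Bregman divergence introduced just above in the excerpt, and holds for any differentiable convex generator $h$, including the negative entropy defined in Equation \eqref{def:phi}. The only mild point worth noting is that, in the context of the paper where $h$ is the negative entropy on $\Delta_q$, one implicitly restricts to $x$ in the relative interior of $\Delta_q$ so that $\nabla h(x)$ is well defined (its components being $1 + \log x_i$); the identity then extends by continuity where needed. No further tools are required.
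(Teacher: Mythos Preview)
Your proof is correct: differentiating the definition $\Dp(x,y) = h(x) - h(y) - \langle \nabla h(y), x-y\rangle$ term by term in $x$ immediately gives $\nabla h(x) - \nabla h(y)$. The paper does not actually supply a proof for this lemma; it is stated in Appendix~\ref{app:tec} as a standard property of Bregman divergences with a reference to \cite{nemirovsky1983wiley}, and your computation is exactly the elementary argument one would expect.
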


\paragraph{Moments of sums of random variables}
\begin{prop}\label{lem:Petrov}[Dharmadhikari and Jogdeo, see \cite{petrov1975sums}, Example 16 p. 60]
 Assume that $X_1,\ldots,X_n$ are centered independent random variables with finite moments of order $p \ge 2$. Then an explicit constant $C_p$ exists such that:
 $$
 \mathbb{E}\left[ |X_1+\ldots+X_n|^p\right] \leq C_p n^{p/2-1} \sum_{k=1}^n \E |X_k|^p,
 $$
 Moreover, $C_p$ is  universal and given by:
 $$
 C_p = \frac{p(p-1)}{2} (1 \vee 2^{p-3}) \left(1+\frac{2}{p} K_{2m}^{(p-2)/2m}\right) \quad \text{with} \quad m = \lfloor p/2 \rfloor \quad \text{and} \quad K_{2m} = \sum_{r=1}^{2m} \frac{r^{2m-1}}{(r-1)!)}.
 $$
\end{prop}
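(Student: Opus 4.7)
The plan is to obtain the qualitative bound as a direct consequence of the Marcinkiewicz--Zygmund inequality combined with Jensen's inequality, and then to extract the explicit constant $C_p$ through a dedicated induction on $n$ following the original argument of Dharmadhikari and Jogdeo. Denote $S_n = X_1 + \cdots + X_n$. For the qualitative part, I would invoke Marcinkiewicz--Zygmund: for $p \ge 2$ and centered independent random variables with finite $p$-th moment, there exists $B_p$ depending only on $p$ such that
$$
\E[|S_n|^p] \le B_p \, \E\left[\Bigl(\sum_{k=1}^n X_k^2\Bigr)^{p/2}\right].
$$
Since $p \ge 2$, the map $x \mapsto x^{p/2}$ is convex on $[0,+\infty)$, so Jensen's inequality applied to the uniform probability on $\{1,\ldots,n\}$ gives
$$
\Bigl(\sum_{k=1}^n X_k^2\Bigr)^{p/2} = n^{p/2}\Bigl(\frac{1}{n}\sum_{k=1}^n X_k^2\Bigr)^{p/2} \le n^{p/2-1}\sum_{k=1}^n |X_k|^p.
$$
Combining both inequalities and taking expectation yields the bound with a constant equal to $B_p$, which already establishes the inequality up to an unspecified constant.

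To reach the explicit form of $C_p$, the plan is to replace the use of Marcinkiewicz--Zygmund by a recursion on $n$. For the induction step, I would write $S_{n+1} = S_n + X_{n+1}$ and use a Taylor expansion of $u \mapsto |u|^p$ to second order, with an integral remainder controlled by $(|S_n|+|X_{n+1}|)^{p-2} X_{n+1}^2$. Taking expectation, the linear-in-$X_{n+1}$ term vanishes by centering and independence, leaving
$$
\E[|S_{n+1}|^p] \le \E[|S_n|^p] + \tfrac{p(p-1)}{2}\, \E\bigl[(|S_n|+|X_{n+1}|)^{p-2}\, X_{n+1}^2\bigr].
$$
The prefactor $\tfrac{p(p-1)}{2}$ in the statement comes directly from this second-order term, while the factor $(1 \vee 2^{p-3})$ arises from the elementary convexity inequality $(a+b)^{p-2} \le (1 \vee 2^{p-3})(a^{p-2}+b^{p-2})$, which splits the quadratic remainder into a term depending only on $S_n$ and a term depending only on $X_{n+1}$. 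Applying Hölder's inequality with exponents $p/(p-2)$ and $p/2$ to each of the resulting integrals then allows one to close the recursion in terms of $\E[|S_n|^p]$ and $\sum_{k \le n+1}\E[|X_k|^p]$.

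The factor $K_{2m}^{(p-2)/2m}$ with $m = \lfloor p/2 \rfloor$ enters at the step where one controls the even moment $\E[|S_n|^{p-2}]$ in order to keep the recursion homogeneous in the $p$-th moments. The plan is to dominate $|S_n|^{p-2}$ by $(S_n^2)^{(p-2)/2}$ and apply a sharp Khintchine-type inequality after a standard symmetrization of $S_n$ by an independent Rademacher sequence; the constant $K_{2m} = \sum_{r=1}^{2m} r^{2m-1}/(r-1)!$ appears as the exact even-moment constant in this Khintchine bound, produced by a Poisson-type generating function computation, and the exponent $(p-2)/2m$ corresponds to interpolating between the integer moment of order $2m$ and the possibly fractional target exponent $p-2$.

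The main obstacle will not be the inequality itself, which is essentially one line from Marcinkiewicz--Zygmund and Jensen, but rather the careful bookkeeping of constants through the induction in order to recover exactly the value of $C_p$ advertised in the statement; in particular one must handle separately the cases $p$ even and $p$ odd (via the floor $m = \lfloor p/2 \rfloor$) and track the multiplicative interplay between the Khintchine constant $K_{2m}$ and the splitting constant $1 \vee 2^{p-3}$. Since the result is used here as a black box with the explicit constant borrowed from \cite{petrov1975sums}, I would actually be content with the qualitative derivation above and refer to Petrov's monograph for the precise bookkeeping.
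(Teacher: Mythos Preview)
The paper does not prove this proposition at all: it is listed in Appendix~\ref{app:tec} under ``State of the art results'' and simply attributed to Dharmadhikari and Jogdeo via \cite{petrov1975sums}, Example~16 p.~60, with no argument given. So there is nothing to compare your proposal against; the paper treats the statement as a black box, exactly as you yourself suggest doing in your final paragraph.

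That said, your sketch is a faithful outline of the original Dharmadhikari--Jogdeo argument: the qualitative part via Marcinkiewicz--Zygmund plus Jensen is correct and immediate, and the route to the explicit constant (Taylor expansion of $|\cdot|^p$ to second order, the $(1\vee 2^{p-3})$ splitting, H\"older with exponents $p/(p-2)$ and $p/2$, and the Khintchine-type constant $K_{2m}$ for the even-moment control) matches the structure of the proof in Petrov. One minor inaccuracy: the induction is not quite on $n$ in the way you describe; Dharmadhikari and Jogdeo work directly with the full sum and bound the cross term globally rather than adding one summand at a time, but the ingredients you list are the right ones. For the purposes of this paper your concluding stance---derive the qualitative bound and refer to \cite{petrov1975sums} for the constant---is exactly what the authors do.
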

\paragraph{Robbins Siegmund Lemma}
We shall state one of the most useful result on stochastic algorithms. This result is
known as the Robbins-Siegmund Lemma and stated below.
\begin{lemma}[Robbins-Siegmund Lemma, see \cite{robbins-siegmund}]\label{lem:Robins-Siegmund}
Consider a filtration $(\mathcal{F}_n)_{n \ge 0}$ and 4 sequences of random variables $(A_n)_{n \ge 0}$, $(B_n)_{n \ge 0}$, $(\alpha_n)_{n \ge 0}$ and $(\beta_n)_{n \ge 0}$ that are $\mathcal{F}_n$-measurables, non-negatives and integrables such that these sequences enjoy the next assumptions:
\begin{itemize}
    \item[(1)] $(\alpha_n)_{n \ge 0}$, $(A_n)_{n \ge 0}$ and $(\beta_n)_{n \ge 0}$ are $\mathcal{F}_n$-predictables.
    \item[(2)] $\sup_{\omega \in \Omega} \prod_{n \ge 1} (1+\alpha_n(\omega)) < +  \infty $ and $\sum_{n \ge 0} \E[\beta_n] < + \infty$.
    \item[(3)] $\E[B_{n+1} \vert \mathcal{F}_n] \leq (1+\alpha_{n+1}) B_n +\beta_{n+1} - A_{n+1}. $
\end{itemize}
 Then, 
 \begin{itemize}
    \item[(i)]
    $B_n \longrightarrow B_{\infty}$ in $L^1$ where $B_{\infty}$ is an integrable random variable and $\sup_{n \ge 1} \E[B_n]<+\infty$.
    \item[(ii)] $\sum_{n \ge 0} \E[A_n] < \infty$ and $\sum_{n \ge 0} A_n< + \infty$ almost surely.
\end{itemize}
\end{lemma}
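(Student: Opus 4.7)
The approach is to reduce the statement to Doob's classical almost-sure convergence theorem for $L^1$-bounded supermartingales, via an explicit change of variables that eliminates the multiplicative factor $(1+\alpha_{n+1})$.

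First I would normalize the processes. Set $P_n := \prod_{k=1}^n(1+\alpha_k)$ with $P_0 := 1$; by the predictability of $(\alpha_n)$ and assumption (2), the sequence $(P_n)$ is $\mathcal{F}_{n-1}$-measurable, non-decreasing, and uniformly bounded by a deterministic constant $P_\infty < \infty$. Introduce
\begin{equation*}
\tilde B_n := B_n / P_n, \qquad \tilde A_n := A_n / P_n, \qquad \tilde \beta_n := \beta_n / P_n.
\end{equation*}
Dividing inequality (3) by $P_{n+1} = (1+\alpha_{n+1})P_n$ yields the simpler recursion
\begin{equation*}
\E[\tilde B_{n+1} \mid \mathcal{F}_n] \le \tilde B_n - \tilde A_{n+1} + \tilde \beta_{n+1},
\end{equation*}
and since $P_n \ge 1$, one retains $\sum_n \E[\tilde \beta_n] \le \sum_n \E[\beta_n] < \infty$.

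Second, I would construct an explicit supermartingale by setting
\begin{equation*}
M_n := \tilde B_n + \sum_{k=1}^n \tilde A_k - \sum_{k=1}^n \tilde \beta_k.
\end{equation*}
Using the predictability of $\tilde A_{n+1}$ and $\tilde \beta_{n+1}$, a direct computation gives $\E[M_{n+1} \mid \mathcal{F}_n] \le M_n$, so $(M_n)$ is an $(\mathcal{F}_n)$-supermartingale. Its negative part is controlled by $M_n^- \le \sum_{k=1}^n \tilde \beta_k$, so $\sup_n \E[M_n^-] \le \sum_n \E[\tilde \beta_n] < \infty$. Doob's almost-sure convergence theorem then provides an integrable $M_\infty$ with $M_n \to M_\infty$ almost surely. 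By monotone convergence, $T := \sum_{k \ge 1} \tilde \beta_k$ is integrable and finite a.s., and the identity $\tilde B_n + \sum_{k=1}^n \tilde A_k = M_n + \sum_{k=1}^n \tilde \beta_k$ shows that the left-hand side converges a.s.\ to $M_\infty + T$. Because $(\sum_{k=1}^n \tilde A_k)_n$ is non-decreasing and bounded above by this limit, $\sum_k \tilde A_k < \infty$ a.s., hence $\sum_k A_k \le P_\infty \sum_k \tilde A_k < \infty$ a.s., and then $\tilde B_n$ and thus $B_n = P_n \tilde B_n$ converge a.s.

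Finally, the integrability statements would follow from taking expectations. Iterating the expectation of (3), together with the elementary inequality $\sum_n \alpha_n \le P_\infty - 1$ that the uniform bound on $\prod(1+\alpha_k)$ entails, yields $\sup_n \E[B_n] < \infty$ and $\sum_n \E[A_n] < \infty$. To lift this to $L^1$ convergence of $(B_n)$, I would invoke Scheffé's lemma: since $B_n \ge 0$ and $B_n \to B_\infty$ a.s., it suffices to show $\E[B_n] \to \E[B_\infty]$. For this, I would use the domination $\tilde B_n \le M_n^+ + T$ coming from the supermartingale identity, and combine the monotone convergence of the partial sums $\sum_{k=1}^n \E[\tilde \beta_k]$ and $\sum_{k=1}^n \E[\tilde A_k]$ with the convergence $\E[M_n] \to \E[M_\infty]$ obtained from the uniform integrability of $(M_n^+)$, the non-negative $L^1$-bounded submartingale associated to $(M_n)$. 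The main delicate point is precisely this last step: upgrading a.s.\ convergence to $L^1$ requires a careful uniform-integrability argument, since supermartingales bounded in $L^1$ need not be UI in general, and one must exploit here the integrable envelope provided by $T$ and the explicit control on $M_n^-$ to close the argument.
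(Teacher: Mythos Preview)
The paper does not prove this lemma; it is quoted as a classical result from \cite{robbins-siegmund} in the appendix of auxiliary facts, so there is no paper proof to compare against. Your argument for the almost-sure conclusions is the standard one and is correct: normalising by $P_n=\prod_{k\le n}(1+\alpha_k)$, forming the supermartingale $M_n=\tilde B_n+\sum_{k\le n}\tilde A_k-\sum_{k\le n}\tilde\beta_k$, controlling $M_n^-$ by the integrable $T=\sum_k\tilde\beta_k$, and invoking Doob's theorem is exactly how Robbins--Siegmund is usually established. The bounds $\sup_n\E[B_n]<\infty$ and $\sum_n\E[A_n]<\infty$ are also fine, though they follow more cleanly by taking expectations in the \emph{normalised} recursion $\E[\tilde B_{n+1}]\le\E[\tilde B_n]-\E[\tilde A_{n+1}]+\E[\tilde\beta_{n+1}]$ and then using $1\le P_n\le P_\infty$, rather than by iterating (3) directly as you suggest.

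There is, however, a genuine gap in your $L^1$ step. You assert that $(M_n^+)$ is ``the non-negative $L^1$-bounded submartingale associated to $(M_n)$'', but for a \emph{super}martingale $(M_n)$ the positive part is \emph{not} a submartingale in general (Jensen applied to the convex non-decreasing map $x\mapsto x^+$ goes the wrong way; it is $(M_n^-)$ that is a submartingale). More seriously, the $L^1$ convergence asserted in conclusion (i) simply fails under the stated hypotheses: take $\alpha_n=A_n=\beta_n\equiv 0$ and let $B_n=\prod_{k\le n}\xi_k$ with $\xi_k$ i.i.d., non-negative, non-constant, $\E[\xi_k]=1$. All hypotheses hold, $B_n\to 0$ a.s., yet $\E[B_n]=1$ for every $n$, so $B_n\not\to B_\infty$ in $L^1$ and Scheff\'e cannot close. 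The paper's statement is thus slightly misstated; what is meant---and what is actually used in the proof of Theorem~\ref{theo:cv_ps}, where the conclusion is written ``$\Dp(\Sb,\Sbnp)\to D_\infty\in L^1$''---is that $B_n\to B_\infty$ \emph{almost surely} with $B_\infty$ integrable, which your supermartingale argument does deliver.
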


\section{Properties on the matrix $\Mb$\label{app:spectre-M} and on the spectrum of $\nabla^2 \Phi^a$}

This section is devoted to obtaining properties of the matrix $\Mb$ depending on the dimension.
Here, we will stress the dependency by writing 
$$\Ups_p = \{ u\subset \{1,\dots,p\}\}$$ and
$\Mb_p$ the matrix such that for all $u,v\in\Ups_p$
\[[\Mb_p]_{u,v} =  (-1)^{|u|-|v|} \delta_{v\subset v}.\]

Let us remark that the elements of $\Ups_p$ can be ordered using the lexicographic order on numbers written in binary. The correspondence writes
\begin{align*}
\emptyset=0,& \quad\{1\}=1\\
\{2\} = 10, & \quad\{1,2\} =11\\
\{3\} = 100, &\quad \{1,3\}=101, \quad \{2,3\} = 110, \quad\{1,2,3\}=111\cdots
\end{align*}
Furthermore 
\[ \Ups_{p+1} = \Ups_p \cup \left\{ u\cup\{p+1\}, u\in\Ups_p\right\}\]
This recursion allows to write a recursion on matrices $\Mb_p$ when rows and columns are ordered as above:
\begin{equation}
\label{eq:rec_M}
\Mb_{p+1}=\begin{pmatrix}
\Mb_p&0\\
-\Mb_p &\Mb_p
\end{pmatrix}
.\end{equation}

\paragraph{Invert of $\Mb$}
Upon closer examination of the matrix $\Mb$,  the general term $\Mb_{u,v}$ corresponds to the Möbius function for subsets ordered by inclusion\footnote{This observation was kindly shared with us by P. Rochet, to whom we extend our sincere thanks.}, denoted by $\mu_{\Ups}(v,u) = (-1)^{|u|-|v|} \mathbf{1}_{v \subset u}$.
This, in turn, allows for the use of the Rota–Möbius inversion formula \cite{rot64} that states that for two functions $f$ and $g$:
$$
\forall v \in \Ups: \quad 
g(v) = \sum_{v \subset u} f(u) \Longleftrightarrow 
\forall u \in \Ups: \quad  f(u) = \sum_{v \subset u} \mu_{\Ups}(v,u) g(u).
$$
As a consequence 
$$ \forall (u,v) \in \Ups \times \Ups \qquad \Mb^{-1}_{u,v}=  \mathbf{1}_{v \subset u}.
$$
\paragraph{Eigenvalues of $\Mb_p\Mb_p^T$}
From \eqref{eq:rec_M}, we deduce that if $V_p=\Mb_p\Mb_p^T$, then 
\[V_{p+1}= \begin{pmatrix}
V_p& -V_p\\-V_p& 2V_p
\end{pmatrix}.\]
\begin{lemma}
\label{lem:vp}
For any $p\ge1$ the eigenvalues of $V_p=\Mb_p\Mb_p^T$ can be obtained by a recursion as
\[Sp(V_0)=\{1\}, \quad Sp(V_{p+1})=\left\{\frac{3+\sqrt{5}}{2}\lambda, \frac{3-\sqrt{5}}{2} \lambda, \lambda\in Sp(V_p) \right\}.
\]
In particular the largest eigenvalue of $V_p$ equals $\left(\frac{3+\sqrt{5}}{2}\right)^p$ and its smallest is $\left(\frac{3-\sqrt{5}}{2}\right)^p=\left(\frac{3+\sqrt{5}}{2}\right)^{-p}$.
\end{lemma}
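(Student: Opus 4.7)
The plan is to exploit the block structure of $V_{p+1}$ to build eigenvectors of $V_{p+1}$ out of those of $V_p$, turning the spectral problem into a $2\times 2$ eigenvalue computation, and then to iterate.

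More precisely, starting from the recursion
$$
V_{p+1} = \begin{pmatrix} V_p & -V_p \\ -V_p & 2V_p \end{pmatrix},
$$
I would fix an eigenpair $(\lambda, v)$ of $V_p$ (so $v \in \R^{2^p}$, $V_p v = \lambda v$) and look for eigenvectors of $V_{p+1}$ of the block form $w = (\alpha v,\ \beta v)^T \in \R^{2^{p+1}}$ with $(\alpha,\beta) \in \R^2 \setminus \{0\}$. A direct block computation gives
$$
V_{p+1} w = \lambda \begin{pmatrix} (\alpha-\beta)\, v \\ (-\alpha+2\beta)\, v \end{pmatrix},
$$
so that $V_{p+1} w = \mu w$ is equivalent to
$$
\lambda \begin{pmatrix} 1 & -1 \\ -1 & 2 \end{pmatrix} \begin{pmatrix} \alpha \\ \beta \end{pmatrix} = \mu \begin{pmatrix} \alpha \\ \beta \end{pmatrix}.
$$
The $2\times 2$ matrix $A = \begin{pmatrix} 1 & -1 \\ -1 & 2 \end{pmatrix}$ is symmetric with trace $3$ and determinant $1$, so its characteristic polynomial is $X^2 - 3X + 1$, yielding the eigenvalues $\tau_\pm := \tfrac{3 \pm \sqrt{5}}{2}$. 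Hence each $\lambda \in \mathrm{Sp}(V_p)$ produces two eigenvalues $\tau_+ \lambda$ and $\tau_- \lambda$ of $V_{p+1}$, associated with eigenvectors $(\alpha_\pm v, \beta_\pm v)^T$ where $(\alpha_\pm,\beta_\pm)$ is an eigenvector of $A$ for $\tau_\pm$.

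To conclude that this describes the entire spectrum and not just part of it, I would argue by a dimension count: if $V_p$ is symmetric of size $2^p \times 2^p$ and admits an orthonormal basis of eigenvectors $(v_1,\ldots, v_{2^p})$, then the $2 \cdot 2^p = 2^{p+1}$ vectors $(\alpha_\pm v_i, \beta_\pm v_i)^T$ obtained above are linearly independent (they split into two families indexed by $\pm$ which live in complementary subspaces once viewed in the $(v_i)$-coordinates, and within each family the $v_i$'s are independent). Together with the base case $V_0 = [1]$, this proves by induction that $\mathrm{Sp}(V_{p+1}) = \{\tau_+ \lambda, \tau_- \lambda : \lambda \in \mathrm{Sp}(V_p)\}$ with the correct multiplicities. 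The claim about the largest and smallest eigenvalues then follows immediately, together with the reciprocity observation $\tau_+ \tau_- = \tfrac{9-5}{4} = 1$, which gives $\tau_-^p = \tau_+^{-p}$.

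The only subtlety I anticipate is the independence argument in the dimension count, but this is elementary once one notes that $A$ is symmetric with distinct eigenvalues, so its two eigenvectors are linearly independent; independence of the $2^{p+1}$ block vectors then follows from the independence of the $v_i$'s in $\R^{2^p}$. The rest of the proof is purely algebraic manipulation of $2 \times 2$ matrices and a straightforward induction on $p$.
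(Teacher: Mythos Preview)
Your proposal is correct and follows essentially the same route as the paper: both exploit the block recursion $V_{p+1}=\begin{pmatrix}V_p&-V_p\\-V_p&2V_p\end{pmatrix}$ and look for eigenvectors of the form $(\alpha v,\beta v)^T$ with $v$ an eigenvector of $V_p$, reducing to a $2\times2$ problem whose eigenvalues are $\tfrac{3\pm\sqrt5}{2}$. Your version is in fact slightly more careful, since you include the dimension count showing that these $2^{p+1}$ vectors form a basis and hence exhaust the spectrum, a point the paper leaves implicit.
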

\begin{proof}
Let $\lambda$ be an eigenvalue of $V_p$ and $u$ an eigenvector, then for $\mathfrak{a}\in\R$
\[V_{p+1}\begin{pmatrix}
u\\\mathfrak{a} u
\end{pmatrix}=\begin{pmatrix}
V_p& -V_p\\-V_p& 2V_p
\end{pmatrix}\begin{pmatrix}
u\\\mathfrak{a} u
\end{pmatrix} = \begin{pmatrix}
\lambda (1-\mathfrak{a} ) u \\\lambda(-1+2\mathfrak{a})u
\end{pmatrix}= \begin{pmatrix}
\lambda (1-\mathfrak{a} ) u \\\lambda\frac{(-1+2\mathfrak{a})}{\mathfrak{a}} au
\end{pmatrix}.\]
As a consequence, $\begin{pmatrix}
u\\ \mathfrak{a} u
\end{pmatrix}$ is an eigenvector of $V_{p+1}$ if and only if \[1-\mathfrak{a}= \frac{(-1+2\mathfrak{a})}{\mathfrak{a}} \iff \mathfrak{a}^2+\mathfrak{a}-1=0.\]
This amounts to choose $\mathfrak{a}=\frac{-1 \pm \sqrt{5}}{2}.$
We deduce from this reasoning that if $\lambda$ is an eigenvalue of $V_p$ then $\frac{3\pm \sqrt{5}}{2}\lambda$ are eigenvalues of $V_{p+1}$.
By recursion we obtain all the eigenvalues of $V_{p+1}$.
\end{proof}

\begin{prop}
\label{prop:hessienne}
For any distribution $a$ and any $s\in \Delta_q$ the Hessian Matrix $\nabla^2\Phi^a(s)$ verifies:
\[\nabla^2 \Phi^a(s)_{i,j} = \Var(Y)\E[\Mbm _{U,i}\Mbm _{U,j}]=\Var(Y)\left(\sum_{u\subset\{1,\cdots, p\}}a_u \Mbm _{u,i}\Mbm _{u,j} \right ), \quad \forall i,j\in\Ups.\]
Moreover, the lowest eigenvalue of $\nabla^2 \Phi^a(s)$ verifies:
$$
\inf \{ \lambda \, : \lambda \in Sp(\nabla^2 \Phi^a(s))\} \ge 
\Var(Y) \left(\frac{3-\sqrt{5}}{2}\right)^p \min_{u\in\Ups} a_u.
$$
\end{prop}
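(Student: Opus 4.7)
My plan is to split the proposition into its two assertions and address them via the chain rule and the spectral recursion of Lemma \ref{lem:vp}, respectively.

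First I would compute the Hessian through the representation $\Phi^a(s)=\Psi^a(\Mbm s)$. From the expansion $\psi_u(\theta)=\theta^2\Var(Y)-2\theta\Cov(Y,Y^u)+\Var(Y^u)$, and using that $Y^u$ has the same marginal distribution as $Y$ (hence $\Var(Y^u)=\Var(Y)$), one reads off $\psi_u''(\theta)=2\Var(Y)$. Since $\Psi^a(x)=\frac{1}{2}\sum_u a_u \psi_u(x_u)$ is separable in the coordinates, $\nabla^2\Psi^a(x)=\Var(Y)\,\mathrm{diag}(a)$ independently of $x$. The chain rule then gives
\[
\nabla^2 \Phi^a(s) \;=\; (\Mbm)^{T}\,\nabla^2\Psi^a(\Mbm s)\,\Mbm \;=\; \Var(Y)\,(\Mbm)^{T}\,\mathrm{diag}(a)\,\Mbm,
\]
whose $(i,j)$-entry is $\Var(Y)\sum_{u\in\Ups} a_u \Mbm_{u,i}\Mbm_{u,j}=\Var(Y)\,\E[\Mbm_{U,i}\Mbm_{U,j}]$, which is exactly the first displayed formula.

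For the eigenvalue lower bound, since $\mathrm{diag}(a)\succeq (\min_{u}a_u) I$ in the positive semidefinite order, conjugation by $\Mbm$ yields
\[
(\Mbm)^{T}\,\mathrm{diag}(a)\,\Mbm \;\succeq\; \bigl(\min_{u\in\Ups} a_u\bigr)\,(\Mbm)^{T}\Mbm,
\]
so that $\lambda_{\min}(\nabla^2\Phi^a(s))\ge \Var(Y)\,(\min_u a_u)\,\lambda_{\min}\bigl((\Mbm)^{T}\Mbm\bigr)$. Next I would use $(\Mbm)^{T}\Mbm = (\Mb\,\Mb^{T})^{-1}$, so its spectrum consists of the reciprocals of the eigenvalues of $V_p=\Mb_p\Mb_p^{T}$. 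By Lemma \ref{lem:vp}, $\lambda_{\max}(V_p)=\bigl(\tfrac{3+\sqrt{5}}{2}\bigr)^{p}$, and using $\tfrac{3+\sqrt{5}}{2}\cdot\tfrac{3-\sqrt{5}}{2}=1$ this gives $\lambda_{\min}\bigl((\Mbm)^{T}\Mbm\bigr)=\bigl(\tfrac{3-\sqrt{5}}{2}\bigr)^{p}$. Substituting delivers the announced bound.

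The routine parts are the chain-rule computation and the matrix inequality; the only nontrivial input is the spectral identity already provided by Lemma \ref{lem:vp}. The main care point, which I would flag explicitly, is the passage from $(\Mbm)^{T}\mathrm{diag}(a)\Mbm$ to the lower bound involving $\min_u a_u$: one should avoid the temptation to claim a naive product-of-spectra identity and instead justify the inequality through the Loewner order, so that the eigenvalue bound translates cleanly after conjugation. The rest is bookkeeping.
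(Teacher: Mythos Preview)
Your proposal is correct and follows essentially the same approach as the paper: compute the Hessian as $\Var(Y)(\Mbm)^{T}\mathrm{diag}(a)\,\Mbm$ and then reduce the eigenvalue bound to the spectrum of $(\Mb\Mb^{T})^{-1}$ via Lemma~\ref{lem:vp}. Your treatment is in fact more explicit than the paper's, which merely asserts the bound in the general case; your use of the Loewner ordering $\mathrm{diag}(a)\succeq(\min_u a_u)I$ to justify the appearance of $\min_u a_u$ is exactly the right way to make that step rigorous.
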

\begin{proof}
The formula for the Hessian matrix derives from a simple computation. 
Concerning the smallest eigenvalue with a lower bounded discrete probability distribution $a$, we can apply Lemma \ref{lem:vp} and obtain that the lowest eigenvalue is $ \Var(Y)  \times \min_{u\in\Ups} a_u \times \left(\frac{3-\sqrt{5}}{2}\right)^p$. 
\end{proof}

\noindent Let us remark that in the case of a uniform distribution $a$, the computation can be carried out explicitly:
\[\nabla^2 \Phi^{(unif)}(s) = \frac{\Var(Y)}{2^p}(\Mb\Mb^T)^{-1}. \]

\section*{Acknowledgments}
 
The authors thank Paul Rochet for recognizing the Möbius function as well as Sébastien Da Veiga for giving them access to the results of the numerical experiments performed in \cite{da_veiga_efficient_2024} and running his R code on the test function of Section 4.2.

\def\cprime{$'$}

\end{document}